\def\acts{\curvearrowright}
\numberwithin{equation}{section}
\newcounter{AbcT}
\newtheorem {Theoremintro}    {Theorem}
\newtheorem {Theorem}    {Theorem}[section]
\def\acts{\curvearrowright}
\newtheorem {Lemma}      [Theorem]    {Lemma}
\newtheorem {Corollary}   [Theorem] {Corollary}
\newtheorem {Proposition}[Theorem]    {Proposition}
\newtheorem {Claim}      [Theorem]    {Claim}
\theoremstyle{remark}
\newtheorem {Remark}		 [Theorem]    {\bf{Remark}}
\newtheorem {Definition} [Theorem]    {\bf{Definition}}
\newcommand{\Cay}{\mathit{Cay}}
\newcounter{DM@bibnum}
\newcommand{\la}{\langle}
\newcommand{\ra}{\rangle}
\def\Stab{{\rm Stab\,}}
\def\NSL_2{{\mathcal N SL_2}}
\def\phi{\varphi}
\def\calT{{\mathcal T}}
\def\hbar{\bar h}
\def\dbN{{\mathbb N}}
\newcommand{\sub}{\subseteq}
\begin{document}

\title{Tarski numbers of group actions}
\author{Gili Golan}
\address{Department of Mathematics, Bar-Ilan University, Ramat Gan 5290002, Israel}
\email{gili.golan@math.biu.ac.il}

\subjclass[2000]{43A07, 20F65, 05E18, 20F10}
\keywords{Tarski number, paradoxical decomposition, amenability, Stallings cores, automata}

\begin{abstract}
The Tarski number of a group action 
$G\acts X$ is the minimal number of pieces in a paradoxical decomposition of it. 
For any $k\ge 4$ we construct a faithful transitive action of a free group of rank $k-1$ with Tarski number $k$. Using similar techniques we construct a group action $G\acts X$ with Tarski number $6$ such that the Tarski numbers of restrictions of this action to finite index subgroups of $G$ are arbitrarily large. 
\end{abstract}

\maketitle

\section{Introduction}

Let $G\acts X$ be a group action. We will always assume that groups are acting from the right. 

\begin{Definition}
The group action $G\acts X$ admits a \emph{paradoxical decomposition} if there exist positive integers $m$ and $n$, disjoint subsets $P_1,\ldots, P_m,Q_1,\ldots, Q_n$ of $X$
and subsets $S_1=\{g_1,\ldots, g_m\}$, $S_2=\{h_1,\ldots, h_n\}$ of $G$ such that
\begin{equation}\label{eq1}X=\bigcup_{i=1}^m  P_ig_i=\bigcup_{j=1}^n  Q_jh_j.\end{equation}
The sets $S_1,S_2$ are called the \emph{translating sets} of the paradoxical decomposition.
\end{Definition}

The minimal possible value of $m+n$ in a paradoxical decomposition of $G\acts X$
is called the \emph{Tarski number} of the action and denoted by $\mathcal T(G\acts X)$.
If $G$ acts on itself by right multiplication, the Tarski number of the action is called the Tarski number of $G$ and denoted by $\mathcal T(G)$.

Clearly, $m,n\ge 2$ in any paradoxical decomposition. Thus, the Tarski number of any group action cannot be smaller than $4$. By a result of Dekker (see, for example, \cite[Theorem~5.8.38]{Sa})
 the Tarski number of a group is $4$ if and only if it contains non abelian free subgroups.  Recent results about Tarski numbers of groups, show that there are groups with arbitrarily large Tarski numbers \cite{OS,EGS}. In \cite{EGS} groups with Tarski number $5$ and groups with Tarski number $6$ are constructed. Note, that no integer $\ge 7$ is known to be the Tarski number of a group.

For group actions, the situation is much more clear.

\begin{Theoremintro}\label{thm:1}
Every integer $k\ge 4$ is the Tarski number of a faithful transitive action of a finitely generated free group.
\end{Theoremintro}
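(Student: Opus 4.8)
The plan is to realise every example as a transitive action on cosets and to convert the computation of the Tarski number into a matching problem on a Schreier graph that can be built by hand. I fix $r=k-1$ and look for a subgroup $H$ of the free group $F=F_r$ so that the coset action $F\acts H\backslash F$ is the desired example. Transitivity is automatic, and faithfulness is equivalent to $H$ being core-free, i.e. $\bigcap_{g\in F}g^{-1}Hg=1$; since an infinite group has no faithful finite transitive action, the set $X=H\backslash F$ must be infinite and $H$ of infinite index. Such an $H$ is encoded by its Schreier graph $\Gamma$: a connected graph with edges labelled by $a_1,\dots,a_r$ in which every label enters and leaves each vertex exactly once, so $\Gamma$ is $2r$-regular and each $s\in F$ acts as a permutation $x\mapsto x\cdot s$ of $X=V(\Gamma)$. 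The case $k=4$ needs nothing new: the regular action of $F_3$ on itself is faithful and transitive, with $\mathcal T=4$ by Dekker's theorem. So from now on I take $k\ge5$.

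Next I would reformulate paradoxicality as a matching condition. After shrinking the pieces one may assume $\{P_ig_i\}$ and $\{Q_jh_j\}$ are partitions of $X$, so a paradoxical decomposition with translating sets $S_1=\{g_1,\dots,g_m\}$ and $S_2=\{h_1,\dots,h_n\}$ is the same as two injections $u,v\colon X\to X$ with disjoint images, where $u(x)\in\{x g_i^{-1}\}$ and $v(x)\in\{x h_j^{-1}\}$. I encode this by the bipartite graph $B(S_1,S_2)$ with left vertices $X^{(1)}\sqcup X^{(2)}$, right vertices $X$, and edges $x^{(1)}\!\sim x g_i^{-1}$, $x^{(2)}\!\sim x h_j^{-1}$. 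Because each $g_i,h_j$ acts as a permutation, $B(S_1,S_2)$ has left-degrees $m,n$ and right-degrees at most $m+n$; being locally finite, it has a left-saturating matching iff Hall's condition $|N(W)|\ge|W|$ holds for every finite $W\subseteq X^{(1)}\sqcup X^{(2)}$. Hence $\mathcal T(F\acts X)$ is the least $m+n$ for which some $(S_1,S_2)$ makes $B(S_1,S_2)$ satisfy Hall, and to prove $\mathcal T\ge k$ it suffices to exhibit, for every $(S_1,S_2)$ with $m+n\le k-1$, a finite \emph{Hall obstruction} $W=W_1\sqcup W_2$ with $|N(W)|<|W|$. Here one must be careful: a direction lying in both $S_1$ and $S_2$ is counted twice in $m+n$ but yields a single permutation, so the winning configuration for the upper bound will use the $k-1$ generators with one direction repeated across the two copies, and the obstructions for the lower bound must exploit the asymmetric distribution of directions between the copies rather than a symmetric almost-invariant set.

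Third, I would construct $\Gamma$ by Stallings foldings from a finite core gadget so that both requirements hold at once. The core is designed to carry one explicit $k$-piece decomposition: choosing $S_1^{*},S_2^{*}$ among the $r=k-1$ generators (and the identity), with a single repeated direction, so that $B(S_1^{*},S_2^{*})$ visibly has a left-saturating matching, giving $\mathcal T\le k$. Away from the core, $\Gamma$ is completed to a $2r$-regular labelled graph in a tree-like way and with enough asymmetry among the generators that $H$ is core-free; an automaton reading reduced words certifies that every nontrivial $s$ moves some coset, which yields faithfulness. The point of the folding/automaton description is to give exact control of how an arbitrary word $s$ permutes $V(\Gamma)$, which is precisely what the lower bound consumes.

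The main obstacle is the lower bound, and within it the fact that the translating elements $g_i,h_j$ range over \emph{all} of $F$, not just the generators: each acts as a permutation of $X$ that may move vertices arbitrarily far along $\Gamma$, so a Hall obstruction has to be insensitive to how far the chosen elements reach. I would handle this by proving that any paradoxical decomposition with $m+n\le k-1$ pieces forces a finite sub-configuration in $\Gamma$ whose existence can be refuted using the folding structure — that is, I would show the few permutations coming from the chosen words can mix only boundedly many gadget-levels, and on a suitable finite union of tiles transverse to those levels they admit an asymmetric set $W=W_1\sqcup W_2$ with $|N(W)|<|W|$. Making one finite obstruction work against all possible long-word translating sets, rather than only against the generators, and simultaneously keeping the core paradox intact so that the bound is exactly $k$ and $H$ stays core-free, is the delicate part that the designed large-scale geometry of $\Gamma$ is built to guarantee.
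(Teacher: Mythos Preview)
Your outline correctly isolates the real difficulty --- the lower bound must defeat \emph{every} pair $(S_1,S_2)$ with $|S_1|+|S_2|\le k-1$, not just the pairs built from generators --- but the mechanism you propose for handling it does not work. A finite core gadget, completed by trees, produces a finitely generated $H$ and encodes only finitely many local constraints; there is no reason why the permutations induced by arbitrary long words should ``mix only boundedly many gadget-levels,'' and in fact they do not. A single Hall obstruction cannot be uniform over the infinite family of candidate translating sets, and your sketch gives no device for manufacturing one that is.

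The paper's construction is fundamentally different from what you describe and does not attempt a uniform obstruction. The subgroup $H$ is \emph{infinitely} generated: one enumerates all $n$-tuples $(p_1,\dots,p_n)$ in $F$ (with $n=k-3$), and for each tuple glues into the core a copy of the Stallings automaton of the derived subgroup $\gamma_2\langle p_1,\dots,p_n\rangle$, hung along a $z$-labelled spine. The lower bound then falls out pointwise: given any $(S_1,S_2)$ with $|S_1|+|S_2|\le k-1$, normalise so $1\in S_1\cap S_2$ and list the remaining $n$ elements as some tuple; the origin of the corresponding embedded automaton is a point of $X$ whose stabiliser contains the full derived subgroup of $\langle S_1\cup S_2\rangle$, so the action on that orbit factors through an abelian quotient and cannot be paradoxical. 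No Hall-type counting is needed. For the upper bound the paper does not exhibit a single gadget-level paradox either; instead an inductive lemma (Proposition~\ref{prop}) shows that in each embedded automaton some rank-$2$ subgroup $\langle x,y_j\rangle$ acts freely, and a partition of $X$ according to the index $j$ assembles the local $(2,2)$-paradoxes into a global $(2,n+1)$-paradox with translating sets $\{1,x\}$ and $\{1,y_1,\dots,y_n\}$.
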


To our knowledge, prior to this paper no integer $>4$ was known to be the Tarski number of a faithful action of a free group. For actions of non-free groups, the only numbers known to be Tarski numbers are $4,5$ and $6$ \cite{EGS}.
In connection with Theorem \ref{thm:1}, we mention the result of J\'onsson, characterizing group actions with Tarski number $4$.

\begin{Theorem}\label{Jonsson}\cite[Theorem~4.8]{Wag}
Let $G\acts X$ be a group action. Then $\mathcal T(G\acts X)=4$ if and only if $G$ has a non abelian free subgroup $F$ such that the restriction of the action to $F$ has cyclic point stabilizers.
\end{Theorem}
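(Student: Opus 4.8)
The plan is to reduce the hypothesis $\mathcal T(G\acts X)=4$ to a concrete ping-pong configuration, read off a free subgroup with cyclic stabilizers, and conversely build such a configuration from the free subgroup. First I would normalize a $2+2$ paradoxical decomposition. With $m=n=2$ in \equ{eq1} we have $X=P_1g_1\cup P_2g_2=Q_1h_1\cup Q_2h_2$; right-translating each covering separately we may assume $g_1=h_1=e$, so $X=P_1\cup P_2g=Q_1\cup Q_2h$ with $g=g_2g_1^{-1}$, $h=h_2h_1^{-1}$, the four pieces still pairwise disjoint. Shrinking $P_2,Q_2$ (which cannot destroy disjointness) we may take $P_2=(X\setminus P_1)g^{-1}$ and $Q_2=(X\setminus Q_1)h^{-1}$. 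Writing $A=P_1$, $B=Q_1$, the requirement that $A,B,P_2,Q_2$ be pairwise disjoint is then equivalent to the \emph{ping-pong conditions}: $A\cap B=\emptyset$, $(A\cup B)g\subseteq A$, $(A\cup B)h\subseteq B$, and for every $x$ either $xg\in A$ or $xh\in B$. Thus $\mathcal T(G\acts X)=4$ holds iff $G$ contains elements $g,h$ admitting disjoint $A,B$ with these conditions.

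For the direction ``$\Rightarrow$'', set $E=(X\setminus A)g^{-1}$ and $E'=(X\setminus B)h^{-1}$. The conditions give $E,E'\subseteq D:=X\setminus(A\cup B)$, $E\cap E'=\emptyset$, and the four identities $(X\setminus A)g^{-1}=E$, $(X\setminus E)g=A$, $(X\setminus B)h^{-1}=E'$, $(X\setminus E')h=B$; moreover $A,B,E,E'$ are nonempty (e.g. $Bg^{-1}\subseteq E$) and pairwise disjoint. These are exactly the hypotheses of the ping-pong (table-tennis) lemma, so $\langle g,h\rangle$ is free of rank $2$ (in particular $g,h$ have infinite order). It remains to see that the restricted action has cyclic point stabilizers. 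Here I would argue orbit by orbit: for an orbit $O=xF$ with $F=\langle g,h\rangle$, the stabilizer $H=\Stab(x)$ is a subgroup of a free group, hence free, of rank equal to the first Betti number of the Schreier coset graph $H\backslash F$ with respect to $g,h$. The ping-pong data restrict to $O$ and force this graph to carry a single essential cycle (its Stallings core is a circle), whence $H$ has rank at most $1$, i.e. is cyclic.

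For the converse ``$\Leftarrow$'', given a nonabelian free $F\le G$ with cyclic stabilizers I would pass to a rank-$2$ free subgroup $\langle a,b\rangle\le F$ (stabilizers only shrink under restriction), put $g=a$, $h=b$, and build the partition $X=A\sqcup B\sqcup D$ satisfying the ping-pong conditions one orbit at a time. On an orbit on which $F$ acts freely, $O\cong F$ and one takes the classical ``last letter'' partition ($A$ the words ending in $a$, $B$ the words ending in $b$, $D$ the rest), which is readily checked to satisfy the conditions. On an orbit $O\cong H\backslash F$ with $H$ infinite cyclic, the Schreier graph is a single loop with rooted trees attached; orienting the trees toward the loop one extends the last-letter rule into the trees and then assigns the finitely many loop vertices so as to meet the local rules around the unique cycle. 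Assembling these partitions gives global $A,B$, hence a $2+2$ paradoxical decomposition and $\mathcal T(G\acts X)=4$.

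The main obstacle is the orbitwise Schreier-graph analysis, which is precisely where cyclicity of stabilizers is both used and produced. In the forward direction one must show that the ping-pong configuration admits at most one independent cycle in each Schreier graph; in the converse one must verify that the single cyclic consistency condition coming from the one loop can always be satisfied, whereas two or more independent loops obstruct a coherent labeling. This is naturally handled through Stallings core graphs, and it is the only step that genuinely separates cyclic stabilizers from larger ones.
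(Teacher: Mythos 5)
The paper does not prove this statement --- it is quoted from Wagon's book --- so there is no in-paper argument to measure yours against; what you propose is essentially the classical proof of J\'onsson's theorem recast in the language of Schreier/Stallings graphs. The normalization of a $2+2$ decomposition to the four-set ping-pong configuration is correct and complete, and so is the deduction that $\la g,h\ra$ is free of rank $2$ (you should just add the one-line check that all four pieces, hence $A,B,E,E'$, are automatically nonempty when $X\neq\emptyset$, since the ping-pong lemma needs this).

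The two places where the theorem actually lives are, however, only asserted. In the forward direction you claim the ping-pong data ``force'' each orbit's Schreier graph to carry a single essential cycle, but you give no mechanism. The missing observation is a counting argument: since $E=(X\setminus A)g^{-1}$ and $(X\setminus E)g=A$, for every point $z$ \emph{exactly one} of ``$z\in E$'' and ``$zg\in A$'' holds, and likewise for $h$-edges with $E'$ and $B$; thus every geometric edge of the Schreier graph certifies exactly one of its two endpoints, while pairwise disjointness of $A,B,E,E'$ means each vertex is certified by at most one edge. Hence every finite subgraph $\Gamma_0$ satisfies $|E(\Gamma_0)|\le|V(\Gamma_0)|$, so its first Betti number is at most $1$, and the stabilizer, being the fundamental group of the Schreier graph, has rank at most $1$. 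Without some such argument the forward direction is not proved. Symmetrically, in the converse the phrase ``assigns the finitely many loop vertices so as to meet the local rules'' needs a witness: one that works is to label each cycle vertex by the label of its incoming cycle edge (put $v$ in $A$ if that edge is labelled $a$, in $B$ if labelled $b$, in neither piece otherwise); cyclic reducedness of the loop word then makes the verification of your local rules, and their compatibility with the last-letter rule on the attached trees, routine. With these two arguments inserted your proof closes up; as written, the decisive steps --- precisely the ones that separate cyclic stabilizers from larger ones, as you yourself note --- are missing.
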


In particular, if $F$ is a non abelian free group and the action $F\acts X$ has cyclic point stabilizers then $\mathcal T(F\acts X)=\mathcal T(F)$. Part (2) of the following theorem generalizes this observation. The theorem, is the group action analogue of parts (a) and (c) of \cite[Theorem 1]{EGS}. Parts (b) and (d) can be extended to group actions as well. 

\begin{Theorem}\label{EGS}
Let $G\acts X$ be a group action.
\begin{enumerate}
\item Let $H\le G$ be a finite index subgroup and $H\acts X$ the action of $G$ restricted to $H$. Then,
$$\calT(H\acts X)-2\leq [G:H](\calT(G\acts X)-2).$$
\item If $G\acts X$ has amenable point stabilizers then $\mathcal T(G\acts X)=\mathcal T(G)$.
\end{enumerate}
\end{Theorem}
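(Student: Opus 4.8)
The plan is to prove both parts by relating paradoxical decompositions of $G\acts X$ to those of the restricted or ambient action, treating them as systems of "demand" and "supply" on the set $X$.

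The plan is to prove both parts through a single reformulation: a paradoxical decomposition of $G\acts X$ with translating sets $S_1,S_2$ is equivalent to a pair of injections $\phi_1,\phi_2\colon X\to X$ with disjoint images and $\phi_t(x)\in xS_t^{-1}$, obtained by passing from the cover \equ{eq1} to a partition and inverting the piecewise translations. Such a pair is exactly a matching saturating the left side of the locally finite bipartite graph on $X\sqcup X$ (left) and $X$ (right) whose edges join $x$ in the $t$-th copy to each $xs^{-1}$, $s\in S_t$. By the marriage theorem for (infinite) families of finite sets, this matching exists if and only if Hall's condition $|N(A)|\ge|A|$ holds for every finite $A\subseteq X\sqcup X$. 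Thus $\calT(G\acts X)$ is the least $|S_1|+|S_2|$ for which this Hall condition holds. I will use this criterion throughout, together with two elementary facts: a decomposition of $X$ restricts to each $G$-orbit $O$ with the same translating sets (since $Og^{-1}=O$), and a decomposition of a quotient $G$-set pulls back along a surjective $G$-map with the same translating sets.

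For part (2), the inequality $\calT(G\acts X)\ge\calT(G)$ needs no hypothesis: restrict a minimal decomposition to one orbit $O\cong K\backslash G$, then pull it back along the $G$-equivariant surjection $G\to K\backslash G$ to obtain a decomposition of $G$ with the same translating sets. For the reverse inequality I use amenability of the point stabilizers. Fix translating sets $S_1,S_2$ realizing $\calT(G)$; by the criterion, Hall's condition holds for $G\acts G$. I claim it then holds, with the same $S_1,S_2$, for every orbit $O\cong K\backslash G$ with $K$ amenable. Suppose not: choose a finite $\bar A\subseteq O\sqcup O$ with $|N(\bar A)|<|\bar A|$, a section $\sigma\colon O\to G$, and a F{\o}lner set $F\subseteq K$, and lift $\bar A$ to $A\subseteq G\sqcup G$ by replacing each coset $c$ occurring in $\bar A$ with the window $F\sigma(c)$. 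Writing $\sigma(c)s^{-1}=k_{c,s}\,\sigma(cs^{-1})$ with $k_{c,s}\in K$, the $s^{-1}$-neighbours of $F\sigma(c)$ are $Fk_{c,s}\,\sigma(cs^{-1})$, so $N(A)$ lies in windows about $N(\bar A)$ enlarged only by the finitely many sets $Fk_{c,s}\setminus F$. Since $K$ is amenable these are negligible, giving $|N(A)|\le|N(\bar A)|\,|F|\,(1+o(1))<|\bar A|\,|F|=|A|$ for $F$ large, contradicting Hall for $G$. Hence every orbit is paradoxical with $S_1,S_2$, and since pieces lying in distinct orbits are automatically disjoint, their union is a decomposition of $X$ with $\calT(G)$ pieces, so $\calT(G\acts X)\le\calT(G)$.

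For part (1), first normalize: replacing the translating elements of each cover in \equ{eq1} by their right multiples by a fixed element (which leaves the pieces $P_i,Q_j$ untouched) lets me assume $e\in S_1$ and $e\in S_2$. It then suffices to prove the following transfer statement, the $G$-set analogue of \cite[Theorem~1(a)]{EGS}: if $A,B\subseteq X$ admit a bijection that is a piecewise right-translation by a finite set $S\subseteq G$ with $e\in S$ and $|S|=k$, then they admit a piecewise right-translation bijection by a set $S'\subseteq H$ with $|S'|\le d(k-1)+1$, where $d=[G:H]$. Granting this, apply it to the two bijections $U_1\to X$ and $U_2\to X$ underlying a minimal decomposition of $G\acts X$ (with $|S_1|=m$, $|S_2|=n$, and $U_1=\phi_1(X)$, $U_2=\phi_2(X)$ disjoint); the resulting $H$-bijections have disjoint domains and together form an $H$-paradoxical decomposition with $|S_1'|+|S_2'|\le d(m-1)+1+d(n-1)+1=d(\calT(G\acts X)-2)+2$, which is the desired bound.

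The main obstacle is the transfer statement itself. Working one $G$-orbit $K\backslash G$ at a time, I fix a right transversal $T$ of $H$ and use the Schreier cocycle $ts=\theta(t,s)\,(t\cdot s)$, $\theta(t,s)\in H$, to express the given $G$-translation bijection in coset coordinates; each $G$-orbit meets at most $d$ $H$-orbits (the $K$--$H$ double cosets), and the task is to re-route the bijection through these so that each of the $k-1$ non-identity generators contributes at most $d$ elements of $H$ while the identity piece contributes only $e$, yielding $d(k-1)+1$. The delicate point — exactly as in the group case of \cite{EGS} — is that no single generator can be rewritten over $H$ in isolation, since an element of $G\setminus H$ moves points between $H$-orbits, so the rewriting must be global; the bookkeeping that produces the additive saving $-(d-1)$ rather than a naive factor $dk$ is where the real work lies, and this is the step I expect to be hardest and would model directly on the corresponding argument in \cite{EGS}.
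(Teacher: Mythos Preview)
Your proposal is correct and follows essentially the same route as the paper: both reduce part~(2) to orbits $G\acts G/\Stab_G(x)$ and then invoke the F{\o}lner-set argument of \cite[Theorem~1(c)]{EGS} (which you spell out explicitly), and both treat part~(1) by reformulating paradoxical decompositions in Hall/matching language---your pair of injections $\phi_1,\phi_2$ is exactly the ``spanning evenly colored subgraph'' of \cite{EGS} that the paper cites---and then deferring the finite-index transfer to the bookkeeping of \cite[Lemma~3.1]{EGS}. The only cosmetic difference is that the paper passes through the intermediate $G$-set $G/\Stab_H(x)$ before restricting to $H$, whereas you stay on $X$ and work with the Schreier cocycle directly; these are equivalent.
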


\begin{proof}
In part (2), using corollary \ref{Cor} below, one can reduce the problem to actions of $G$ on $G/\Stab_G(x)$. Then, following the argument of \cite[Theorem 1(c)]{EGS} yields the result. The proof of part (1) requires a modification of \cite[Lemma 3.1(i)]{EGS}. The definition of colored Cayley graphs from \cite{EGS} extends naturally to the group action case. Using Corollary \ref{Cor} and Lemma \ref{equi}(2) below, one can reduce the problem to finding a spanning evenly colored $k$-subgraph of $\Cay(H\acts H/\Stab_H(x),(S_1',S_2'))$, when a spanning evenly colored $k$-subgraph of $\Cay(G\acts G/\Stab_H(x),(S_1,S_2))$ is known to exist.
\end{proof}

In \cite{EGS} it is observed that there exists $t$ such that the property of having Tarski number $t$ is not invariant under quasi isometry. Indeed, a construction from \cite{EJ} yields a non amenable group $G$ with finite index subgroups with arbitrarily large Tarski numbers. 
The only estimate of the value of $t$ bounds it from above by $10^{10^8}$. We prove an analogue result for group actions with $t=6$. 

\begin{Theoremintro}\label{thm:3}
Let $F$ be a free group of rank $3$. There exists a faithful transitive action $F\acts X$ such that $\mathcal T(F\acts X)=6$ and restrictions of the action to finite index subgroups of $F$ have arbitrarily large Tarski numbers. 
\end{Theoremintro}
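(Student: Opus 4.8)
The plan is to combine the group produced in \cite{EJ} with the Stallings--core techniques behind Theorem~\ref{thm:1}. Take $G$ to be a finitely generated infinite \emph{torsion} non-amenable group carrying finite-index subgroups $G_1,G_2,\dots$ with $\calT(G_n)\to\infty$; this is exactly the group underlying the quasi-isometry discussion above. Since $G$ is torsion it has no nonabelian free subgroup, so $\calT(G)\ge 5$ by Dekker's theorem, and in fact $\calT(G)\ge 6$ by the refinement in \cite{EGS}. Present $G$ as a quotient $\phi\colon F\twoheadrightarrow G$, where $F$ is free of rank $3$ and $N=\ker\phi$. I would then look for a \emph{core-free} subgroup $H\le N$ and set $X=H\backslash F$ with its right $F$-action: core-freeness makes the action faithful, and it is transitive by construction. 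Everything reduces to producing a single such $H$ for which $\calT(F\acts H\backslash F)=6$.

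For any core-free $H\le N$ the unbounded-restriction half is almost automatic. For each $n$ put $F_n'=\phi\iv(G_n)$, a finite-index subgroup of $F$ with $\phi(F_n')=G_n$. The $F_n'$-orbit of the base coset is $F_n'$-isomorphic to $F_n'\acts F_n'/(F_n'\cap H)$, and since $H\le N$ we have $F_n'\cap H\le F_n'\cap N$, giving an equivariant quotient onto $F_n'/(F_n'\cap N)\cong G_n$. Using that the Tarski number does not increase under passage to an invariant subset, nor under passage to an equivariant quotient (cf. Corollary~\ref{Cor}), we obtain
\[
\calT(F_n'\acts H\backslash F)\ \ge\ \calT\bigl(F_n'\acts F_n'/(F_n'\cap H)\bigr)\ \ge\ \calT\bigl(F_n'\acts F_n'/(F_n'\cap N)\bigr)\ =\ \calT(G_n),
\]
the last equality holding because the $\phi$-induced action $F_n'\acts G_n$ factors through the normal subgroup $F_n'\cap N$. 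As $\calT(G_n)\to\infty$, the restrictions of $F\acts X$ to the $F_n'$ have unbounded Tarski numbers, independently of the precise choice of $H\le N$.

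Pinning $\calT(F\acts H\backslash F)$ to exactly $6$ is the substance of the theorem, and this is where the flexibility of actions over groups is exploited. The Schreier (Stallings) graph $\Gamma(H\backslash F)$ is a connected cover of $\Cay(G,S)$, where $S$ is the image of a free basis of $F$; since Tarski numbers do not increase under covers, such a cover can have Tarski number far below the (possibly enormous, only boundedly known) value $\calT(G)$. The task is to design $H$, via the automata/Stallings-core construction of Theorem~\ref{thm:1}, so that $\Gamma(H\backslash F)$ has just enough room to carry an explicit paradoxical decomposition into $6$ pieces—giving $\calT\le 6$—while remaining rigid enough to forbid any decomposition into $5$ or fewer. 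Faithfulness is built in by arranging the construction so that the resulting $H$ is core-free, a property one reads off directly from $\Gamma$ in the Stallings framework.

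The main obstacle is the lower bound $\calT(F\acts H\backslash F)\ge 6$, that is, excluding $m+n\le 5$. The value $4$ is handled through J\'onsson's criterion (Theorem~\ref{Jonsson}): all point stabilizers are conjugates of $H$ lying inside $N$, and $H$ must be arranged so that no nonabelian free subgroup of $F$ acts on $H\backslash F$ with cyclic point stabilizers, which the torsion-ness of $G$ makes attainable. Excluding the value $5$ is the delicate step; following the scheme used in the proof of Theorem~\ref{EGS} and in \cite{EGS}, I would recast it as the non-existence of a spanning evenly colored $5$-subgraph of the colored Cayley graph $\Cay(F\acts H\backslash F,(S_1,S_2))$ for every pair of translating sets, and engineer the cover so that the rigidity inherited from $\Cay(G,S)$ obstructs all such subgraphs. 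Making a single core-free $H\le N$ of the rank-$3$ group $F$ simultaneously support the $6$-piece decomposition and block every $\le 5$-piece one is where the real work lies.
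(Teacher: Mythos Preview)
Your central inequality runs the wrong way. For a $G$-equivariant surjection $X\to Y$, Lemma~\ref{equi}(2) says that translating sets for $Y$ are also translating sets for $X$; hence $\calT(G\acts X)\le\calT(G\acts Y)$, not the reverse. (You even say this yourself later: ``Tarski numbers do not increase under covers.'') In your chain the map $F_n'/(F_n'\cap H)\to F_n'/(F_n'\cap N)$ is an equivariant surjection, so what you actually get is
\[
\calT\bigl(F_n'\acts F_n'/(F_n'\cap H)\bigr)\ \le\ \calT\bigl(F_n'\acts F_n'/(F_n'\cap N)\bigr)\ =\ \calT(G_n),
\]
the opposite of what you need. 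As a sanity check, with $H=\{1\}\le N$ your argument would yield $\calT(F_n'\acts F_n')\ge\calT(G_n)$, but the left side is $4$. So the quotient onto $G_n$ gives only an \emph{upper} bound on the restricted Tarski numbers, and the unboundedness step collapses entirely. The same reversal kills any hope of extracting the lower bound $\calT(F\acts H\backslash F)\ge 6$ from $\calT(G)$: the cover $F/H\to F/N\cong G$ only tells you $\calT(F\acts F/H)\le\calT(G)$. (Two further loose ends: it is not clear the group of \cite{EJ} can be taken $3$-generated, and ``torsion $\Rightarrow\calT\ge 6$'' is not what \cite{EGS} proves.)

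The paper's proof does not pass through any auxiliary quotient group. It works entirely inside $F=\la x,y,z\ra$ via the Zassenhaus $p$-filtration $\{\omega_n F\}$: one chooses $m(n)$ so that every nontrivial element of $\omega_{m(n)}F$ has length $\ge 12n$, and glues into a single core the automata of \emph{all} $n$-tuples from $\omega_{m(n)}F$, for every $n$. The lower bound $\calT(\omega_{m(n)}F\acts F/H)\ge n+3$ comes from a \emph{fixed point}, not a quotient: any $n$-generated $K\le\omega_{m(n)}F$ has its automaton sitting inside the core, so $K$ stabilizes a vertex and cannot act paradoxically. The upper bound $\le 6$ exploits the length condition $\ge 12n$: in each attached automaton one must delete at most $n$ edges to obtain a spanning tree, and the long-cycle condition lets one do this so that no vertex loses more than one edge; the resulting spanning tree of $\calG$ has every vertex of degree $\ge 5$, which gives the doubling inequality of Theorem~\ref{second}. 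Finally, $\calT=6$ exactly (ruling out $5$) is obtained by exhibiting, for any candidate translating sets $\{1,a\},\{1,b,c\}$, explicit finite sets $A_1,A_2$ that violate Theorem~\ref{first}, using that the automaton of $\la a^r,(a^b)^r,(a^c)^r\ra$ with $r=p^{m(3)}$ already sits inside the core. None of these mechanisms appears in your outline; the parts you flag as ``where the real work lies'' are in fact the entire argument.
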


Note that by Theorem \ref{Jonsson}, $6$ cannot be replaced by $4$ in Theorem \ref{thm:3}. We don't know if it can be replaced by $5$. 

\vskip .12cm
{\bf Organization.}  Section \ref{background} contains background information about Tarski numbers of group actions. 
 Section \ref{automata} contains preliminary information about subgroups of free groups and their Stallings cores. Section \ref{construction} contains the proof of Theorem \ref{thm:1} and Section \ref{qi} contains the proof of Theorem \ref{thm:3}.

\vskip .12cm
{\bf Acknowledgments.} The author would like to thank Mikhail Ershov 
and Mark Sapir for useful discussions 
and comments on the text. Most of the research was done during the author's stay at Vanderbilt University and at the University of Virginia. She is grateful for the accommodations and hospitality.

\section{Tarski numbers of group actions}\label{background}

\begin{Lemma}\cite[Proposition~1.10]{Wag}
Let $G\acts X$ be a free action. Then, if $G$ has a paradoxical decomposition with translating sets $S_1,S_2$, then $X$ has a paradoxical decomposition with the same translating sets. 
\end{Lemma}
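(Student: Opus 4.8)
The plan is to transport the paradoxical decomposition of $G$ to $X$ one orbit at a time, exploiting the fact that a free action identifies each orbit with a copy of $G$. Write the given paradoxical decomposition of $G$ (acting on itself by right multiplication) as pairwise disjoint sets $A_1,\dots,A_m,B_1,\dots,B_n\subseteq G$ with translating sets $S_1=\{g_1,\dots,g_m\}$ and $S_2=\{h_1,\dots,h_n\}$, so that $\bigcup_{i=1}^m A_ig_i=\bigcup_{j=1}^n B_jh_j=G$. First I would fix a transversal $T\subseteq X$ for the orbits of $G\acts X$, that is, a set meeting each orbit in exactly one point. Since the action is free, for each $t\in T$ the map $g\mapsto t\cdot g$ is a bijection from $G$ onto the orbit $t\cdot G$, and $X=\bigsqcup_{t\in T} t\cdot G$.

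Next I would define the candidate pieces by pulling $A_i$ and $B_j$ back through every orbit: set $P_i=\{t\cdot a: t\in T,\ a\in A_i\}$ for $1\le i\le m$ and $Q_j=\{t\cdot b: t\in T,\ b\in B_j\}$ for $1\le j\le n$, retaining the translating sets $S_1,S_2$ unchanged. The covering condition is then essentially formal: using associativity of the action together with $\bigcup_{i=1}^m A_ig_i=G$, one gets
$$\bigcup_{i=1}^m P_ig_i=\bigcup_{i=1}^m\{t\cdot(ag_i): t\in T,\ a\in A_i\}=\Big\{t\cdot s: t\in T,\ s\in\bigcup_{i=1}^m A_ig_i\Big\}=T\cdot G=X,$$
and symmetrically $\bigcup_{j=1}^n Q_jh_j=X$.

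The step that actually invokes the hypothesis, and which I expect to be the only genuine point, is the disjointness of the family $P_1,\dots,P_m,Q_1,\dots,Q_n$. Suppose two of these sets share a point, say $t\cdot a=t'\cdot a'$ with $t,t'\in T$ and $a,a'$ lying in two of the sets $A_i,B_j$. Since $t$ and $t'$ then lie in a common orbit and $T$ meets each orbit exactly once, we must have $t=t'$; freeness of the action now forces $a=a'$. Hence the two chosen sets from the $G$-decomposition already shared the element $a$, contradicting the pairwise disjointness of $A_1,\dots,A_m,B_1,\dots,B_n$ unless the two sets coincide. This is precisely where a non-free action would break down: without freeness, $t\cdot a=t\cdot a'$ need not imply $a=a'$, and the pulled-back pieces could overlap. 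Combining this disjointness with the two covering identities above exhibits a paradoxical decomposition of $G\acts X$ with the same translating sets $S_1,S_2$ (and the same number of pieces), completing the proof.
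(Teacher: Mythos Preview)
Your argument is correct. The paper does not supply its own proof of this lemma; it simply cites \cite[Proposition~1.10]{Wag}, and your proof is exactly the standard argument given there: pick a transversal for the orbits, push the pieces of the paradoxical decomposition of $G$ into each orbit via the bijection $g\mapsto t\cdot g$, and use freeness to verify disjointness.
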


\begin{Corollary}\label{x,y}
If the free group $\la x,y\ra$  acts freely on $X$, then $X$ has a paradoxical decomposition with translating sets $\{1,x\}$ and $\{1,y\}$.
\end{Corollary}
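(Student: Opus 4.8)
The plan is to reduce the statement to the classical paradoxical decomposition of the free group of rank two and then invoke the preceding Lemma. The key point is that the free group $F=\la x,y\ra$, acting on itself by right multiplication, already admits a paradoxical decomposition with translating sets $\{1,x\}$ and $\{1,y\}$. Granting this, the Corollary is immediate: since $\la x,y\ra$ acts freely on $X$ by hypothesis, the preceding Lemma transfers the decomposition verbatim from $F$ to $X$, preserving the translating sets.

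To produce the decomposition of $F$, I would partition the reduced words according to their last letter (the last, rather than the first, because the action is on the right). Let $A^+, A^-, B^+, B^-$ denote the sets of reduced words ending in $x, x^{-1}, y, y^{-1}$ respectively; together with $\{1\}$ these five sets partition $F$, so in particular $P_1=A^+$, $P_2=A^-$, $Q_1=B^+$, $Q_2=B^-$ are pairwise disjoint, as the definition requires. The crux is the standard cancellation computation: right-multiplying by $x$ a word ending in $x^{-1}$ deletes its final letter, giving a bijection from $A^-$ onto $\{1\}\cup A^-\cup B^+\cup B^-$, the set of all reduced words \emph{not} ending in $x$. Hence $P_1\cup P_2x=A^+\cup(A^-x)=F$, and symmetrically $Q_1\cup Q_2y=B^+\cup(B^-y)=F$, which is the desired paradoxical decomposition of $F$.

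There is no deep obstacle here; the only thing demanding care is the bookkeeping forced by the right-action convention. One must track last letters rather than first letters, check that the identity is correctly swept into the translate $A^-x$ (resp.\ $B^-y$) rather than being left uncovered, and confirm the pairwise disjointness of $P_1,P_2,Q_1,Q_2$. With these routine verifications in place, the appeal to the preceding Lemma finishes the argument.
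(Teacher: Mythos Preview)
Your proof is correct and follows the same approach as the paper: the paper simply cites \cite[Theorem~1.2]{Wag} for the fact that $\la x,y\ra$ has a paradoxical decomposition with translating sets $\{1,x\}$ and $\{1,y\}$, and then applies the preceding Lemma, whereas you spell out that classical decomposition explicitly. The added detail is fine and your bookkeeping for the right-action convention is accurate.
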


\begin{proof}
The free group $\la x,y\ra $ has a paradoxical decomposition with these translating sets \cite[Theorem~1.2]{Wag}.
\end{proof}

\begin{Lemma}\label{equi}
Let $G\acts X$ be a group action. 
\begin{enumerate}
\item If $H\le G$ is a subgroup of $G$ and $H\acts X$ is the action of $G$ restricted to $H$ then $\mathcal T(G\acts X)\le \mathcal T(H\acts X)$.
\item Let $G\acts Y$ be another $G$-action and $f\colon X\to Y$ be a $G$-equivariant surjective map. If $S_1,S_2$ are translating sets of a paradoxical decomposition of $G\acts Y$ then they are also translating sets of a paradoxical decomposition of $G\acts X$.
\end{enumerate}
\end{Lemma}

\begin{proof}
(1) Every paradoxical decomposition with translating elements from $H$ is in particular a paradoxical decomposition with translating elements from $G$.

(2) Let $P_1,\ldots, P_m,Q_1,\ldots, Q_n\sub Y$  be a paradoxical decomposition of $G\acts Y$ with translating sets $S_1=\{g_1,\ldots, g_m\}$ and $S_2=\{h_1,\ldots, h_n\}$. Then the inverse images 
 $f^{-1}(P_1),\dots f^{-1}(P_m),f^{-1}(Q_1),\dots,f^{-1}(Q_n)$ form a paradoxical decomposition of $G\acts X$ with the same translating sets.
\end{proof}

\begin{Corollary}\label{transitive}
Let $G\acts X$ be a transitive action and $x\in X$. Let $\Stab_G(x)=\{g\in G:xg=x\}$ be the stabilizer of $x$. Then $G\acts X$ has a paradoxical decomposition with translating sets $S_1,S_2$ if and only if so does the action $G\acts G/\Stab_G(x)$, where $G/\Stab_G(x)$ is the set of right cosets.
\end{Corollary}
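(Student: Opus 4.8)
The plan is to produce a $G$-equivariant bijection between $X$ and the right coset space $G/\Stab_G(x)$, and then to read off the equivalence directly from Lemma \ref{equi}(2). Since the two actions will turn out to be isomorphic as $G$-sets, paradoxical decompositions with prescribed translating sets must correspond on the nose, and no new combinatorial work is needed.

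Concretely, write $H=\Stab_G(x)$ and let $G/H=\{Hg:g\in G\}$ denote the set of right cosets, equipped with the right $G$-action $(Hg)\cdot g'=H(gg')$. First I would define $\phi\colon G/H\to X$ by $\phi(Hg)=xg$ and verify the three routine points. It is well defined: if $Hg=Hg'$ then $g'=hg$ for some $h\in H$, so $xg'=(xh)g=xg$. It is injective: if $xg=xg'$ then $x\bigl(g(g')\iv\bigr)=x$, so $g(g')\iv\in H$ and hence $Hg=Hg'$. Surjectivity is exactly the transitivity hypothesis, since every element of $X$ has the form $xg$. Finally it is $G$-equivariant, because $\phi\bigl((Hg)\cdot g'\bigr)=x(gg')=(xg)g'=\phi(Hg)\cdot g'$.

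Having a $G$-equivariant bijection $\phi$ in hand, I would close the argument by invoking Lemma \ref{equi}(2) twice. Regarding $\phi$ as a surjective $G$-equivariant map from $G\acts G/H$ onto $G\acts X$, Lemma \ref{equi}(2) shows that any paradoxical decomposition of $G\acts G/H$ with translating sets $S_1,S_2$ gives, via preimages under $\phi$, a paradoxical decomposition of $G\acts X$ with the \emph{same} translating sets. Because $\phi$ is bijective, its inverse $\phi\iv\colon X\to G/H$ is again a surjective $G$-equivariant map, so a second application of Lemma \ref{equi}(2) furnishes the converse implication. Together these give the stated equivalence for any fixed pair $(S_1,S_2)$.

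I do not expect a genuine obstacle here; the statement is essentially the orbit–stabilizer correspondence combined with the transport-of-paradoxes principle already established in Lemma \ref{equi}(2). The only point demanding care is that the action is on the right, so that stabilizers produce \emph{right} cosets and the coset action must be taken as $(Hg)\cdot g'=H(gg')$ rather than the more familiar left-coset formula; getting this convention consistent is what makes $\phi$ equivariant.
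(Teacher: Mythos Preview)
Your proof is correct and follows the same approach as the paper: establish the orbit--stabilizer $G$-equivariant bijection between $X$ and $G/H$ and then invoke Lemma~\ref{equi}(2). One small slip in your write-up: with $\phi\colon G/H\to X$, Lemma~\ref{equi}(2) pulls a paradoxical decomposition of the \emph{target} $X$ back to the \emph{source} $G/H$ via preimages, not the other way around, so the two applications you describe are interchanged; since $\phi$ is a bijection this is harmless.
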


\begin{proof}
Let $H=\Stab_G(x)$. 
For every $y\in X$ there exists $g\in G$ such that $y=xg=xHg$. Sending $y$ to $Hg$ results in a $G$-equivariant isomorphism between $X$ and the quotient set $G/H$.
 Thus Lemma \ref{equi}(2) yields the result. 
\end{proof}

\begin{Remark}\label{rem}
Let $H\triangleleft G$ be a normal subgroup. Then if $G\acts G/H$ is paradoxical so is the group $G/H$.
\end{Remark}

\begin{proof}
Every translating element from $G$ can be replaced by its image in $G/H$.
\end{proof}

\begin{Lemma}\label{orbits}
Let $G\acts X$ be a group action.
\begin{enumerate}
\item Let $\{X_{\alpha}\}_{\alpha\in I}$ be a partition of $X$ in which every set is closed under the action of $G$. Then $G\acts X$ has a paradoxical decomposition with translating sets $S_1,S_2$ if and only if for every $\alpha$, the action $G\acts X_{\alpha}$ has a a paradoxical decomposition  with translating sets $S_1,S_2$.
\item $G\acts X$ has a paradoxical decomposition with translating sets $S_1,S_2$ if and only if the same is true for every orbit of the action.
\end{enumerate}
\end{Lemma}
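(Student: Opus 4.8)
The plan is to observe that part (2) is the special case of part (1) in which $\{X_\alpha\}$ is the partition of $X$ into $G$-orbits: each orbit is closed under the action, and the orbits partition $X$. So it suffices to prove part (1), and I would prove the two implications separately. Throughout I use that for each $g\in G$ the map $y\mapsto yg$ is a bijection of $X$, hence commutes with unions, intersections and complements of subsets.

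For the ``if'' direction, suppose that for each $\alpha$ the action $G\acts X_\alpha$ admits a paradoxical decomposition with the given translating sets $S_1=\{g_1,\dots,g_m\}$, $S_2=\{h_1,\dots,h_n\}$; write the pieces as $P_i^\alpha,Q_j^\alpha\sub X_\alpha$, so that $X_\alpha=\bigcup_i P_i^\alpha g_i=\bigcup_j Q_j^\alpha h_j$. I would set $P_i=\bigcup_\alpha P_i^\alpha$ and $Q_j=\bigcup_\alpha Q_j^\alpha$. These pieces are pairwise disjoint: two pieces carrying different upper indices $\alpha\ne\beta$ lie in the disjoint sets $X_\alpha,X_\beta$, while two pieces with the same $\alpha$ are disjoint because the original decomposition of $X_\alpha$ is. Since translation by $g_i$ commutes with unions, $\bigcup_i P_i g_i=\bigcup_\alpha\bigcup_i P_i^\alpha g_i=\bigcup_\alpha X_\alpha=X$, and likewise $\bigcup_j Q_j h_j=X$, so the $P_i,Q_j$ form a paradoxical decomposition of $G\acts X$ with translating sets $S_1,S_2$.

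For the ``only if'' direction, suppose $P_1,\dots,P_m,Q_1,\dots,Q_n\sub X$ form a paradoxical decomposition of $G\acts X$ with translating sets $S_1,S_2$. Fix $\alpha$ and restrict by setting $P_i^\alpha=P_i\cap X_\alpha$ and $Q_j^\alpha=Q_j\cap X_\alpha$; these remain pairwise disjoint subsets of $X_\alpha$. The step where invariance enters is the identity $P_i^\alpha g_i=(P_i\cap X_\alpha)g_i=(P_ig_i)\cap(X_\alpha g_i)=(P_ig_i)\cap X_\alpha$, which holds because $g_i$ acts as a bijection (so it preserves intersections) and $X_\alpha g_i=X_\alpha$ since $X_\alpha$ is $G$-invariant. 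Hence $\bigcup_i P_i^\alpha g_i=\bigl(\bigcup_i P_i g_i\bigr)\cap X_\alpha=X\cap X_\alpha=X_\alpha$, and symmetrically $\bigcup_j Q_j^\alpha h_j=X_\alpha$, so $G\acts X_\alpha$ is paradoxical with translating sets $S_1,S_2$.

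I expect no serious obstacle: the lemma is essentially bookkeeping about how disjointness and covering behave under splitting and gluing along an invariant partition. The only place demanding care is the restriction step in the ``only if'' direction, where one must combine $G$-invariance of $X_\alpha$ (via $X_\alpha g_i=X_\alpha$) with the fact that the action is by bijections in order to move the translation past the intersection with $X_\alpha$; without invariance the restricted pieces would in general fail to cover $X_\alpha$.
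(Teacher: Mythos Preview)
Your proof is correct and follows essentially the same approach as the paper: derive (2) from (1) by taking the orbit partition, and for (1) intersect the pieces with $X_\alpha$ in one direction and take unions over $\alpha$ in the other. The paper's argument is more terse, but your added detail about using $G$-invariance of $X_\alpha$ via $X_\alpha g_i=X_\alpha$ to justify $(P_i\cap X_\alpha)g_i=(P_ig_i)\cap X_\alpha$ is exactly the point underlying its one-line restriction step.
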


\begin{proof}
$(2)$ follows from $(1)$ by taking the partition of $X$ to be the set of orbits of the action $G\acts X$.

$(1)$ In the one direction, for each $\alpha$, the intersection of the translated sets in a paradoxical decomposition of $X$ with $X_{\alpha}$ forms a paradoxical decomposition of $X_{\alpha}$ with the same translating sets. In the other direction, assume that every $X_{\alpha}$ has a paradoxical decomposition with translating sets 
$S_1=\{g_1,\dots g_m\}$, $S_2=\{h_1,\dots h_n\}$ and translated sets $P_1^{\alpha},\dots P_m^{\alpha}, Q_1^{\alpha}\ldots Q_n^{\alpha}$. Then, the unions $P_i=\bigcup_{\alpha\in I}{P_i^{\alpha}}$ for  $i=1,\dots,m$ and $Q_j=\bigcup_{\alpha\in I}{Q_j^{\alpha}}$ for $j=1,\dots,n$ form a paradoxical decomposition of $X$ with translating sets $S_1$ and $S_2$.
\end{proof}

Combining Lemma \ref{orbits}(2) and Corollary \ref{transitive} we get the following.

\begin{Corollary}\label{Cor}
Let $G\acts X$ be a group action. It has a paradoxical decomposition with translating sets $S_1,S_2$ if and only if for every $x\in X$, the action $G\acts G/\Stab_G(x)$ has a a paradoxical decomposition with these sets as translating sets.
\end{Corollary}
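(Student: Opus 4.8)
The plan is to chain the two preceding results, so the argument is essentially formal bookkeeping. First I would invoke Lemma \ref{orbits}(2) to reduce the question from the whole space $X$ to the orbits of the action: the action $G\acts X$ admits a paradoxical decomposition with translating sets $S_1,S_2$ if and only if, for every orbit $\mathcal{O}$ of the action, the restricted action $G\acts\mathcal{O}$ admits one with these same translating sets.

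Next I would fix an arbitrary point $x\in X$ and consider its orbit $\mathcal{O}_x=xG$ (recall the action is from the right). The restriction $G\acts\mathcal{O}_x$ is transitive by construction, so Corollary \ref{transitive}, applied with the basepoint $x\in\mathcal{O}_x$, gives that $G\acts\mathcal{O}_x$ admits a paradoxical decomposition with translating sets $S_1,S_2$ if and only if $G\acts G/\Stab_G(x)$ does. Combining the two reductions in the backward direction, I pick one representative $x_\alpha$ in each orbit $\mathcal{O}_\alpha$: if $G\acts G/\Stab_G(x_\alpha)$ is paradoxical with the given translating sets for every $\alpha$, then each $G\acts\mathcal{O}_\alpha$ is, and hence so is $G\acts X$.

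The only point requiring a moment's care is matching the quantifier ``for every $x\in X$'' in the statement against the ``for every orbit'' produced by Lemma \ref{orbits}(2). I would resolve this by observing that for two points $x,x'$ lying in the same orbit, the conditions that $G\acts G/\Stab_G(x)$ and that $G\acts G/\Stab_G(x')$ be paradoxical are both equivalent, via Corollary \ref{transitive}, to the single property that the orbit action $G\acts\mathcal{O}_x=G\acts\mathcal{O}_{x'}$ be paradoxical with translating sets $S_1,S_2$; hence they are equivalent to one another. Consequently, ranging over all $x\in X$ imposes exactly the same family of conditions as ranging over one representative per orbit, which is what the orbit reduction supplies. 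I do not expect any genuine obstacle here, as both ingredients are already in hand and the verification is purely a matter of correctly handling the quantifiers.
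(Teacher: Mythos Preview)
Your proposal is correct and follows exactly the paper's own approach, which simply combines Lemma~\ref{orbits}(2) and Corollary~\ref{transitive}. Your extra paragraph reconciling the quantifier ``for every $x\in X$'' with ``for every orbit'' is a welcome clarification that the paper leaves implicit.
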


The following are the analogues for group actions of results of \cite{EGS}, proved originally for groups. Remark \ref{1} is the equivalent of \cite[Remark 2.2]{EGS}. Theorem \ref{first} follows from \cite[Lemma 2.5]{EGS} and \cite[Theorem 2.6]{EGS}. Theorem \ref{second} is a reformulation of \cite[Lemma 5.1]{EGS}.

\begin{Remark}\label{1}
If $G\acts X$ has a paradoxical decomposition with translating
sets $S_1,S_2$, then $G\acts X$ also has a paradoxical decomposition with translating
sets $S_1g_1,S_2g_2$ for any given $g_1, g_2\in G$. In particular, we can always assume that $1\in S_1,S_2$.
\end{Remark}

\begin{Theorem}\label{first}
Let $G\acts X$ be a group action. Let $S_1,S_2$ be finite subsets of $G$. Then, the following assertions are equivalent.
\begin{enumerate}
\item $G\acts X$ has a paradoxical decomposition with translating sets $S_1,S_2$.
\item For any pair of finite subsets $A_1,A_2\sub X$, $|A_1S_1^{-1}\cup A_2S_2^{-1}|\ge |A_1|+|A_2|$.
\end{enumerate}
\end{Theorem}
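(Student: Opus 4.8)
The plan is to recast the existence of a paradoxical decomposition with translating sets $S_1,S_2$ as the existence of a matching saturating one side of an auxiliary bipartite graph, and then to recognize condition (2) as exactly the Hall condition for that matching.

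First I would build a bipartite graph $\Gamma$. Its right part is $V=X_1\sqcup X_2$, two disjoint copies of $X$ (the two demands that $X$ be covered, once through $S_1$ and once through $S_2$), and its left part is $U=X$ (each point may be used in at most one piece, since the $P_i,Q_j$ are pairwise disjoint). I join a copy $y\in X_1$ to $x\in U$ whenever $y\in xS_1$, equivalently $x\in yS_1^{-1}$, and a copy $y\in X_2$ to $x\in U$ whenever $x\in yS_2^{-1}$. After shrinking the pieces so that $\bigsqcup_i P_ig_i$ and $\bigsqcup_j Q_jh_j$ are genuine partitions of $X$, a paradoxical decomposition is precisely a matching of $\Gamma$ saturating $V$: a copy $y\in X_1$ matched to $x$ records that $x$ lies in the piece $P_i$ with $y=xg_i$, and the disjointness of all pieces is exactly the requirement that no $x\in U$ be used twice; conversely a saturating matching yields disjoint pieces whose $S_1$- and $S_2$-translates cover $X$. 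Here I should check that every vertex of $V$ has finite degree: the neighbourhood of $y\in X_1$ is $yS_1^{-1}$, of size at most $|S_1|$, and likewise on the $X_2$ side. This is the only place where the finiteness of $S_1,S_2$ enters, and it is crucial.

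Next I would translate the Hall condition. A finite $W\sub V$ splits as $A_1\sqcup A_2$ with $A_1\sub X_1$, $A_2\sub X_2$, viewed as finite subsets $A_1,A_2\sub X$, so $|W|=|A_1|+|A_2|$; and since $X$ is $G$-invariant, its neighbourhood is $N(W)=A_1S_1^{-1}\cup A_2S_2^{-1}\sub X$. Hence the Hall condition $|N(W)|\ge|W|$ for every finite $W\sub V$ is literally assertion (2). The implication (1)$\Rightarrow$(2) is then the routine necessity direction of Hall's theorem: restricting a saturating matching to $A_1\sqcup A_2$ injects it into $N(A_1\sqcup A_2)$.

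The substance, and the main obstacle, is the converse. Since $\Gamma$ need not be finite, Hall's condition alone does not in general guarantee a saturating matching; what rescues the argument is that the side $V$ to be saturated has all degrees finite. I would therefore invoke the infinite form of Hall's marriage theorem, or prove it directly by compactness: each $y\in V$ must be assigned a neighbour, so candidate assignments live in the compact product $\prod_{y\in V}(N(y)\cup\{\ast\})$ of finite sets, the conditions ``$y$ is matched'' and ``$y,y'$ receive distinct values'' are closed, and every finite family of them is satisfiable by the finite Hall theorem applied to the finitely many vertices involved (whose Hall condition is inherited from (2)). The finite intersection property then produces a global matching saturating $V$, completing (2)$\Rightarrow$(1). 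The delicate point is precisely this passage from the finite to the infinite: one must not appeal to Hall's condition naively, but exploit the finite degrees on $V$ to run the compactness argument (equivalently, to cite the correct infinite matching theorem).
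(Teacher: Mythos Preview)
Your argument is correct: recasting (1) as the existence of a $V$-saturating matching in the bipartite graph on $X\sqcup(X_1\sqcup X_2)$, identifying (2) with Hall's condition, and invoking the locally finite (on the $V$ side) version of Hall's theorem via compactness is exactly the right mechanism, and your care about where finiteness of $S_1,S_2$ enters is on point. (A cosmetic remark: in the compactness step you can simply work in $\prod_{y\in V}N(y)$ without the extra symbol $\ast$, since you want every $y$ matched.)

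The paper does not supply its own proof of this theorem; it merely records that the statement follows from \cite[Lemma~2.5]{EGS} and \cite[Theorem~2.6]{EGS}, which in that reference are formulated for the regular action and carry over verbatim to a general $G$-set. Those results in \cite{EGS} are proved by the same Hall--matching/compactness route you outline, so your approach is essentially the one underlying the cited reference rather than a genuinely different argument.
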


\begin{Theorem}\label{second}
Let $G\acts X$ be a group action and $S=\{a,b,c\}\sub G$. Assume that for any finite $A\sub X$ we have $|AS^{-1}\cup A|\ge 2|A|$. Then $\mathcal T(G\acts X)\le 6$. 
\end{Theorem}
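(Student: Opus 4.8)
The plan is to reduce to the Hall-type criterion of Theorem \ref{first} and then compress a coarse eight-piece decomposition down to six pieces by a relabelling trick. Write $T=\{1,a,b,c\}=\{1\}\cup S$, so that $AS^{-1}\cup A=AT^{-1}$ and the hypothesis reads $|AT^{-1}|\ge 2|A|$ for every finite $A\subseteq X$. First I would apply Theorem \ref{first} with $S_1=S_2=T$: for finite $A_1,A_2\subseteq X$ one has $A_1T^{-1}\cup A_2T^{-1}=(A_1\cup A_2)T^{-1}$, so putting $A=A_1\cup A_2$ the hypothesis gives $|(A_1\cup A_2)T^{-1}|\ge 2|A_1\cup A_2|\ge |A_1|+|A_2|$. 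Hence condition (2) of Theorem \ref{first} holds for the pair $(T,T)$, and $G\acts X$ admits a paradoxical decomposition with translating sets $T,T$; concretely $X=\bigcup_{t\in T}P_t\,t=\bigcup_{t\in T}Q_t\,t$ with all eight pieces $\{P_t\}\cup\{Q_t\}$ pairwise disjoint.

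Next I would record this decomposition as a pair of choice maps. For each $x$ pick $s_1(x),s_2(x)\in T$ with $x\,s_1(x)^{-1}\in P_{s_1(x)}$ and $x\,s_2(x)^{-1}\in Q_{s_2(x)}$, and set $f_i(x)=x\,s_i(x)^{-1}$. Disjointness of the $P_t$ (resp.\ $Q_t$) makes each $f_i$ injective, while disjointness of $\bigcup_t P_t$ from $\bigcup_t Q_t$ gives $\operatorname{Im}f_1\cap\operatorname{Im}f_2=\varnothing$; in particular $f_1(x)\ne f_2(x)$, so $s_1(x)\ne s_2(x)$ for every $x$. Thus the data is an injective pair $f_1,f_2$ with disjoint images, together with an unordered pair $\{s_1(x),s_2(x)\}$ of distinct elements of $T$ attached to each $x$.

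The compression step is where the bound $6$ is gained, and I expect it to be the main point. Viewing $T$ as the vertex set of a complete graph on four vertices and each pair $\{s_1(x),s_2(x)\}$ as an edge, I would fix two distinct $p,q\in T$ and orient every edge so that $p$ is never a tail and $q$ is never a head: send each edge meeting $p$ toward $p$, each remaining edge meeting $q$ away from $q$, the edge $\{p,q\}$ as $q\to p$ (consistent with both rules), and the unique edge inside $T\setminus\{p,q\}$ arbitrarily. Applying this fixed orientation to the pair at each $x$ re-splits $\{s_1(x),s_2(x)\}$ into a tail label $s_1'(x)$ and a head label $s_2'(x)$; I then set $f_1'(x),f_2'(x)$ accordingly, possibly swapping $f_1(x)$ and $f_2(x)$ at that point. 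Since $f_1,f_2$ are injective with disjoint images, the four values $f_1(x),f_2(x),f_1(x'),f_2(x')$ are distinct whenever $x\ne x'$, so any per-point swap keeps $f_1',f_2'$ injective with disjoint images. By construction the tail labels avoid $p$ and the head labels avoid $q$, so $S_1:=\{s_1'(x):x\in X\}\subseteq T\setminus\{p\}$ and $S_2:=\{s_2'(x):x\in X\}\subseteq T\setminus\{q\}$ each have at most three elements.

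Finally I would reassemble: putting $P_s'=\{f_1'(x):s_1'(x)=s\}$ and $Q_s'=\{f_2'(x):s_2'(x)=s\}$, the identity $f_i'(x)\,s_i'(x)=x$ yields $X=\bigcup_{s\in S_1}P_s'\,s=\bigcup_{s\in S_2}Q_s'\,s$, while injectivity of $f_1',f_2'$ and disjointness of their images make all the pieces pairwise disjoint. This is a paradoxical decomposition with translating sets $S_1,S_2$, whence $\calT(G\acts X)\le |S_1|+|S_2|\le 6$. The only genuinely delicate point is the orientation argument of the third paragraph — that the pairs in $T$ can be split so that both label sets miss a vertex — which is exactly what forces the count down from $8$ to $6$; everything else is bookkeeping built on Theorem \ref{first}.
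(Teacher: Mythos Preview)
Your argument is correct. The reduction to Theorem~\ref{first} with $S_1=S_2=T=\{1,a,b,c\}$ is sound, the bookkeeping with the choice maps $f_1,f_2$ is accurate (injectivity and disjointness of images follow exactly as you say, and $s_1(x)\ne s_2(x)$ is forced), and the orientation trick on the six edges of the complete graph on $T$ does precisely what you claim: tails avoid $p$, heads avoid $q$, so both translating sets land in three-element subsets of $T$. The per-point swap preserves injectivity and disjoint images because, as you note, the four values $f_1(x),f_2(x),f_1(x'),f_2(x')$ are pairwise distinct for $x\ne x'$.

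As for comparison: the paper does not actually prove Theorem~\ref{second}. It is stated there as a reformulation of \cite[Lemma~5.1]{EGS} and used as a black box. So there is no in-paper proof to compare against; your write-up supplies a self-contained argument that the paper omits. The compression step from eight pieces to six via the edge-orientation on $K_4$ is the essential combinatorial content, and you have isolated it cleanly.
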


\section{Schreier graphs and automata}\label{automata}

The definitions in this section follow \cite{BOl,MSW,Ol}.

Given a free group $F=\la x_1,x_2,\dots x_m\ra$ and a subgroup $H\le F$, let $\mathcal{G}$ denote the Cayley graph of the action $F \acts F/H$ 
  with respect to the symmetric set $S=\{x_1^{\pm 1},\dots,x_m^{\pm 1}\}$. We will refer to this graph as the Schreier graph of the subgroup $H$. By definition every vertex in the graph has exactly $2m$ outgoing edges, each labeled by a different element of $S$. For every (directed) edge $e$, $e_-$ and $e_+$ will denote the initial and final vertex of $e$ respectively. Note that every edge $e$ has an inverse edge $f$ such that $e_-=f_+,e_+=f_-$ and the labels of $e$ and $f$ are inverses of each other. Sometimes we will refer to $e$ and its inverse as a single geometric edge labeled by a letter $c^{\pm 1}$. A path in $\mathcal {G}$ is a sequence of directed edges $e_1,\dots,e_n$ where for $i<n$ the final vertex of $e_i$ is the initial vertex of $e_{i+1}$. It is said to be reduced if $e_{i+1}\neq e_i^{-1}$ for all $i<n$. A cycle $e_1,\dots,e_n$ is called reduced if it is reduced as a path. That is, $e_n$ might be equal to $e_1^{-1}$ in a reduced cycle. 

Let $o$ be the vertex corresponding to the group $H$ and $\mathcal{C}$ the minimal subgraph of $\mathcal{G}$ containing $o$ and all reduced cycles from it to itself. $\mathcal{C}$ will be called the \emph{Stallings core} of $H$ or simply the \emph{core} of $H$. Sometimes we will refer to the core as the automaton of $H$. Note that the elements of $H$ are exactly those words which in reduced form can be read on a cycle in $\mathcal{C}$ from $o$ to itself. Also, if for some reduced word $w\in F$, the coset $Hw$ belongs to the core of $H$, then there exists $w'\in F$ such that $ww'$ is reduced and $ww'\in H$. Given the core $\mathcal{C}$ of $H$, it is possible to construct from it the Schreier graph of $H$ by attaching appropriate trees at each vertex of $\mathcal{C}$ with less than $2m$ outgoing edges. If such a vertex exists, the group $H$ does not contain any normal subgroup. For this fact and further details see \cite{BOl}.

Given a finite number of elements $p_1,p_2,\dots,p_n\in F$ there is a simple algorithm for the construction of the automaton $\mathcal A$ corresponding to $H=\la p_1,p_2,\dots,p_n\ra$. The first step consists of attaching $n$ cycles to the origin $o$ and labeling them by the words $p_i$. The second step, consists of identifying every two outgoing edges of the same vertex which have the same label, until there are no vertices with two outgoing edges labeled by the same letter.
 At last, vertices of degree one other than the origin are deleted. For further details, see \cite{MSW}. Once $\mathcal A$ is given, it is possible to erase a finite number of edges and get a spanning tree $T$. If $k$ edges were erased, then $H$ is free of rank $k$. In particular, $k\le n$. 
 For this fact and further details, see \cite{Ol}.

\begin{Lemma}\label{deg 4}
Let $F=\la x_1,\dots,x_m\ra$ be a free group of rank $m$ and $p_1,\dots,p_n\in F$. 
\begin{enumerate}
\item Let $\mathcal A$ be the automaton corresponding to the subgroup $H=\la p_1,\dots,p_n\ra$. Then, the origin $o$ has at most $2n$ incoming edges.
\item Let $K\le H$ be a (not necessarily finitely generated) subgroup and $\mathcal A'$ the automaton of $K$. Then, the origin $o'$ of $\mathcal{A}'$ has at most $2n$ incoming edges.
\item Let $M\le F$ be any finitely generated subgroup of infinite index and $\mathcal B$ the automaton corresponding to it. Then, there exists a vertex $v$ in $\mathcal B$ such that $v$ has less than $2m$ incoming edges. 
\end{enumerate}
\end{Lemma}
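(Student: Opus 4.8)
The plan is to handle the three parts in order, deriving (1) directly from the construction of the automaton, (2) from (1) via a covering argument, and (3) from a finiteness count together with the incoming/outgoing symmetry of cores.

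For part (1), I would argue on the explicit construction of $\mathcal{A}$. Before any folding, $\mathcal{A}$ is a union of $n$ cycles attached at $o$, the $i$-th reading $p_i$ and meeting $o$ only at its endpoints. Such a cycle contributes to $o$ exactly two incoming directed edges: its last edge (reading the final letter of $p_i$) and the inverse of its first edge (reading the inverse of the initial letter of $p_i$); a length-one relator gives a single geometric loop at $o$, which still accounts for two incoming directed edges. Hence the pre-folding graph has at most $2n$ incoming edges at $o$. Folding only identifies edges carrying the same label and so cannot increase this count, and deleting degree-one vertices cannot create incoming edges at $o$. Therefore $o$ has at most $2n$ incoming edges in $\mathcal{A}$.

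For part (2), I would use that $K\le H$ gives an $F$-equivariant surjection $F/K\to F/H$, $Kg\mapsto Hg$, and hence a label-preserving graph morphism $\pi$ from the Schreier graph of $K$ onto that of $H$ with $\pi(o')=o$. Since both graphs are $2m$-regular with exactly one edge of each label at every vertex, $\pi$ is a local isomorphism and therefore sends reduced paths to reduced paths. Consequently $\pi$ carries each reduced cycle at $o'$ (reading a reduced word in $K\subseteq H$) to a reduced cycle at $o$, and so maps the core $\mathcal{A}'$ of $K$ into the core $\mathcal{A}$ of $H$. Restricted to the incoming edges at $o'$ the map $\pi$ is injective, because distinct incoming edges at a vertex carry distinct labels and $\pi$ preserves labels; each incoming edge of $\mathcal{A}'$ at $o'$ lies on a reduced cycle and hence maps to an incoming edge of $\mathcal{A}$ at $o$. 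Thus the number of incoming edges at $o'$ is at most the number at $o$, which is at most $2n$ by part (1).

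For part (3), I would note that $M$ finitely generated makes $\mathcal{B}$ finite, while $M$ of infinite index makes $F/M$, and hence the Schreier graph $\mathcal{G}$ of $M$, infinite. First I would record that the core is closed under inverse edges (reversing a reduced cycle yields a reduced cycle), so at every vertex the numbers of incoming and of outgoing edges agree. If every vertex of $\mathcal{B}$ had $2m$ incoming edges it would then have $2m$ outgoing edges, so no trees would need to be attached in reconstructing $\mathcal{G}$ from $\mathcal{B}$; but then $\mathcal{B}=\mathcal{G}$ would be simultaneously finite and infinite, a contradiction. Hence some vertex $v$ of $\mathcal{B}$ has fewer than $2m$ incoming edges. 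I expect the main obstacle to be part (2): the delicate points are verifying that the covering $\pi$ restricts to a map of cores and is injective on the incoming edges at the base vertex, which is exactly what transfers the bound from $\mathcal{A}$ to $\mathcal{A}'$; parts (1) and (3) are essentially bookkeeping with the construction and the inverse-edge symmetry.
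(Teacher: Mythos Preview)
Your argument for part~(1) has a genuine gap. The claim ``folding only identifies edges carrying the same label and so cannot increase this count'' is false: a fold at $o$ can identify another vertex $w$ with $o$, and then every incoming edge of $w$ becomes a new incoming edge of $o$. Concretely, take $F=\la a,b,c\ra$, $n=3$, $p_1=a$, $p_2=ab$, $p_3=ac$. In the wedge the origin has $6$ incoming directed edges. Folding the two non-loop $a$-edges identifies their endpoints into a single vertex $u$ and drops the count to $5$. Folding the remaining $a$-loop with the edge $o\to u$ then identifies $u$ with $o$; the edges $(u,o)$ labeled $b$ and $c$ become loops, and the incoming count at $o$ jumps back to $6$. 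So a single fold can strictly increase the count, and your inductive step does not go through as stated. The conclusion is still true, but it needs a different justification: the paper uses a Nielsen-reduced generating set $\{q_1,\dots,q_k\}$ with $k\le n$, and observes that the last letter of any reduced $w\in H$ must coincide with the last letter of some $q_i^{\pm 1}$, giving at most $2k\le 2n$ labels for incoming edges at $o$. An alternative patch is an Euler-characteristic count on the \emph{final} core: since every vertex other than $o$ has geometric degree $\ge 2$ there, $\deg(o)\le 2(|E|-|V|+1)=2\,\rk H\le 2n$.

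Your parts~(2) and~(3) are correct. For~(2) the paper's argument is shorter and avoids the covering map entirely: if $c$ labels an incoming edge at $o'$ then some reduced word of $K$ ends in $c$; since $K\le H$, the same word lies in $H$, so $c$ already labels an incoming edge at $o$. Thus the set of incoming labels at $o'$ is contained in that at $o$, and~(1) finishes. Your covering argument reaches the same conclusion but the injectivity-on-labels observation alone already suffices. Part~(3) is essentially identical to the paper's proof.
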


\begin{proof}
(1) Let $N=\{q_1,\dots,q_k\}$ be a Nielsen reduced set, Nielsen equivalent to $\{p_1,\dots,p_n\}$. In particular $k\le n$ and $N$ freely generates $H$. Thus, every element $w\in H$ has a unique presentation as a word in the elements of $N$ and their inverses. Also, if $q_i^{\epsilon}$ for $\epsilon=\pm 1$ is the last element in the presentation of $w\in H$ then, as a word in the generators of $F$, the last letters of $w$ and $q_i^{\epsilon}$ coincide. Thus, there are at most $2k\le 2n$ possibilities for the last letter of a reduced word in $H$. In particular, the origin of $\mathcal {A}$ has at most $2n$ distinct incoming edges.

(2) If $c$ labels an incoming edge of $o'$ in $\mathcal{A}'$ then $c^{-1}$ labels an outgoing edge and there is a reduced word $w=c^{-1}w'$ in $K$ beginning with $c^{-1}$. Since $K\le H$, the word $w\in H$. Thus $c^{-1}$ labels an outgoing edge of $o$ in $\mathcal {A}$ and $c$ labels an incoming one. Hence the result follows from part $(1)$.

(3) If every vertex in $\mathcal B$ is of degree $2m$ then $\mathcal B$ is the Schreier graph of the action $F\acts F/M$. Since $M$ is finitely generated, the set of vertices of $\mathcal B$ is finite. Thus, $M$ has finite index in $F$, a contradiction.
\end{proof}

\begin{Proposition}\label{prop}
Let $G_n=\la x,y_1,\dots,y_n,z\ra$ be an $n+2$ generated free group. Then for every $p_1,\dots,p_n\in G_n$, if $H=\gamma_2\la p_1,\dots,p_n\ra$ is the derived subgroup of the group they generate, there exists $j\in\{1,\dots,n\}$ such that for all $g\in G_n$ we have $H\cap \la x,y_j\ra ^g=\{1\}$. 
\end{Proposition}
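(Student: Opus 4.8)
The plan is to pass to Stallings cores and reduce the statement to a purely combinatorial/homological counting problem about the core of $K:=\la p_1,\dots,p_n\ra$. Throughout write $A_j=\la x,y_j\ra$ and $H=\gamma_2(K)=[K,K]$, and note that $H\le K$, so $H\cap A_j^g=[K,K]\cap A_j^g$.

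First I would reduce to a statement about ranks of intersections: for a fixed $j$, I claim that $H\cap A_j^g=\{1\}$ for all $g$ if and only if $A_j^g\cap K$ has rank $\le 1$ for every $g$. The forward direction is immediate by contraposition: if $L:=A_j^{g}\cap K$ has rank $\ge 2$ then $L$ is a nonabelian free group, so $1\ne[L,L]\le[K,K]\cap A_j^{g}=H\cap A_j^g$. For the converse, suppose every $A_j^g\cap K$ has rank $\le1$. A nontrivial such intersection is infinite cyclic, $A_j^g\cap K=\la w\ra$, and corresponds to a unicyclic component of the fibre product computing the intersection; hence $w$ is conjugate in $K$ to the element read around an embedded (simple) cycle of the Stallings core $\mathcal A$ of $K$. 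A simple cycle represents a nonzero class in $H_1(\mathcal A;\Z)=K^{\mathrm{ab}}$, and this class is conjugation invariant, so $w\notin[K,K]$; since $K^{\mathrm{ab}}$ is torsion free, $\la w\ra\cap[K,K]=\{1\}$, i.e.\ $H\cap A_j^g=\{1\}$. So it suffices to produce one $j$ for which all intersections $A_j^g\cap K$ have rank $\le 1$.

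Next I would translate this into the core $\mathcal A$ of $K$, a finite connected folded graph with $\beta_1(\mathcal A)=\mathrm{rank}(K)=:k\le n$. By the Stallings fibre-product (pullback) description of intersections, the subgroups $A_j^g\cap K$, as $g$ varies, correspond to the connected components of the subgraph $P_j\subseteq\mathcal A$ spanned by the $x$- and $y_j$-labelled edges, with $\mathrm{rank}(A_j^g\cap K)$ equal to the first Betti number of the corresponding component. Thus $j$ is \emph{good} (all intersections of rank $\le1$) precisely when no component of $P_j$ has $\beta_1\ge2$; call $j$ \emph{bad} otherwise. The goal becomes: among $j=1,\dots,n$ at least one index is good.

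The main obstacle is this final counting, and the difficulty is exactly that the $x$-edges are common to all the $P_j$: the cyclic components of the $x$-subgraph $\mathcal A_x$ (there are $a:=\beta_1(\mathcal A_x)$ of them, as $\mathcal A_x$ is a disjoint union of paths and cycles) sit inside every $P_j$, so a naive Betti-number count over all $j$ over-counts them. I would resolve this by contraction. Contract all $x$-edges to obtain a connected graph $\bar{\mathcal A}$ with $\beta_1(\bar{\mathcal A})=k-a$, marking the $a$ vertices that are images of the cyclic $x$-components; then the $\beta_1$ of any component of $P_j$ equals the number of marked vertices it contains plus the $\beta_1$ of its $y_j$-part, so $j$ is bad iff some $y_j$-component of $\bar{\mathcal A}$ has this combined count $\ge2$. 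Now adjoin one new vertex $\omega$ joined by a single edge to each marked vertex; the resulting $\bar{\mathcal A}^+$ is connected with $\beta_1(\bar{\mathcal A}^+)=k-1$ (assuming $a\ge1$; the case $a=0$ is easier and handled directly, since then a bad $j$ already forces $\beta_1(P_j)\ge2$). For each $j$ let $R_j\subseteq\bar{\mathcal A}^+$ be the union of the $y_j$-edges with the star at $\omega$. Because the star is a tree, every cycle of $R_j$ must use a $y_j$-edge, and since distinct $j$ use disjoint edge-colours the subspaces $H_1(R_j)\le H_1(\bar{\mathcal A}^+)$ are linearly independent; hence $\sum_j\beta_1(R_j)\le k-1$. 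Finally a bad $j$ forces $\beta_1(R_j)\ge1$: a $y_j$-component with $\beta_1\ge1$ already contributes a cycle to $R_j$, while a $y_j$-component that is a tree carrying two marked vertices closes up into a cycle through $\omega$. Therefore at most $k-1\le n-1$ indices are bad, so some $j\in\{1,\dots,n\}$ is good, which is exactly the assertion of the Proposition.
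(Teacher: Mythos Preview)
Your argument is correct, but it is organized very differently from the paper's proof, so a brief comparison is in order.

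The paper proceeds by induction on $n$. In the inductive step one assumes the conclusion fails for some $p_1,\dots,p_{n+1}\in G_{n+1}$, looks at the failure for the index $j=n+1$, and applies the retraction $\pi\colon G_{n+1}\to G_n$ that kills $y_{n+1}$. Since an element of $\gamma_2\la p_1,\dots,p_{n+1}\ra\cap\la x,y_{n+1}\ra^{g}$ is sent by $\pi$ into $\gamma_2 G_n\cap\la x\ra^{\pi(g)}=\{1\}$, one deduces that $\pi(p_1),\dots,\pi(p_{n+1})$ satisfy a nontrivial relation, hence generate a subgroup of rank at most $n$; choosing $n$ generators $q_1,\dots,q_n$ of that subgroup and using the failures at the remaining indices $j\le n$ then contradicts the inductive hypothesis. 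No Stallings machinery is used.

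Your route is a direct, one–shot argument through the core $\calA$ of $K=\la p_1,\dots,p_n\ra$. The reduction in your first paragraph (good $j$ $\Leftrightarrow$ every $A_j^{g}\cap K$ has rank $\le 1$) is sound: the delicate direction really does come down to the fact that the generator of a rank–one intersection is, after moving the basepoint inside $\calA$, represented by an embedded circuit of $\calA$, which is nonzero in $H_1(\calA;\Z)=K^{\mathrm{ab}}$; torsion–freeness of $K^{\mathrm{ab}}$ then gives $\la w\ra\cap[K,K]=\{1\}$. Your pullback identification of the intersections with components of $P_j$ is the standard Stallings fibre product (the core of $A_j$ being a single vertex with $x$– and $y_j$–loops). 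The homological counting after contracting the $x$–edges and adding the apex $\omega$ is also correct: the key point, which you state, is that every cycle of $R_j$ carries at least one $y_j$–edge, and this forces the images of the $H_1(R_j)$ in $H_1(\bar\calA^{+})$ to form an independent family even though the $R_j$ share the star at $\omega$.

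What the two approaches buy: the paper's induction is shorter and avoids any graph–theoretic bookkeeping; your argument is more structural and actually yields the sharper conclusion that at most $\rk K-1\le n-1$ of the indices are bad, which explains transparently why $n$ indices always leave one good.
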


\begin{proof}
By induction on $n$. For $n=1$ for every $p_1\in G_1$ the group $H=\{1\}$ and the proposition holds. Assume the proposition holds for $n$ but not for $n+1$. Let $p_1,\dots,p_{n+1}\in G_{n+1}$ be elements for which the proposition fails. In particular, for $j=n+1$ there exists $g\in G_{n+1}$ and a non trivial word $u(x_1,\dots,x_{n+1})\in \gamma_2\la x_1,\dots,x_{n+1}\ra$, where $\la x_1,\dots,x_{n+1}\ra$ is a free group of rank $n+1$, such that substituting $p_i$ for $x_i$ gives a non trivial element $u=u(p_1,\dots,p_{n+1})\in\gamma_2\la p_1,\dots,p_{n+1}\ra\cap \la x,y_{n+1} \ra ^{g}$.

Let $\pi\colon G_{n+1}\to G_n$ be the homomorphism taking $y_{n+1}$ to $1$ and any other generator of $G_{n+1}$ to its copy in $G_n$. Then $\pi(u)\in \la x \ra ^{\pi(g)}$. Since $\pi(u)=\pi(u(p_1,\dots,p_{n+1}))=u(\pi(p_1),\dots,\pi(p_{n+1}))\in \gamma_2G_n$ we have $\pi(u)=1$. Indeed the intersection $\gamma_2G_n\cap \la x\ra ^{\pi(g)}$ is trivial.
Since $u(x_1,\dots,x_{n+1})$ is a non trivial word, $\pi(p_1),\dots,\pi(p_{n+1})$  are not free generators of the  group $K$ they generate. In particular, if $K$ is free of rank $r$ then $r\le n$. Let $\{q_1,\dots,q_n\}\sub G_n$ be an $n$ element subset which generates $K$. The following claim yields the required contradiction.

\begin{Claim}
The conclusion of Proposition \ref{prop} does not hold for $G_n$ with  the elements $q_1,\dots,q_n$. 
\end{Claim}    

\begin{proof}
Otherwise, for some $j\in\{1,\dots,n\}$ and every $a\in G_n$ we have $\gamma_2\la q_1,\dots,q_n\ra\cap\la x,y_j\ra^{a}=\{1\}$. 
By assumption, there exists $b\in G_{n+1}$ and a non trivial element $v\in \gamma_2\la p_1,\dots,p_{n+1}\ra$ $ \cap \la x,y_j\ra ^{b}$. In particular, $v={b}^{-1}wb$ for a non trivial word $w\in \la x,y_j\ra$. Let $v'=\pi(v)$, then $v'=\pi(b)^{-1}w\pi(b)$ where we now consider the word $w$ as an element of $G_n$. Note that as a word in the letters $x^{\pm 1},y_j^{\pm 1}$, the reduced form of $w$ is not affected by the homomorphism $\pi$. Therefore, since $w$ is not trivial, $v'\neq 1$. On the other hand, $v'\in \gamma_2\la \pi(p_1),\dots,\pi(p_{n+1})\ra=\gamma_2\la q_1,\dots,q_n\ra$. Therefore $\gamma_2\la q_1,\dots,q_n\ra\cap \la x,y_j\ra ^{\pi(b)}$ is not trivial. A contradiction. 
\end{proof}    
\end{proof}

\begin{Corollary}\label{cyc2}
Let $G_n=\la x,y_1,\dots,y_n,z\ra$ be a free group of rank $n+2$ and $p_1,\dots,p_n\in G_n$. Let $\mathcal{A}$ be the automaton corresponding to the subgroup $H=\gamma_2\la p_1,\dots,p_n\ra$. Then there exists $j\in\{1,\dots,n\}$ such that there are no reduced non trivial cycles in $\mathcal {A}$ labeled by elements of $\la x,y_j\ra$. 
\end{Corollary}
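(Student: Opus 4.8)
The plan is to deduce the statement directly from Proposition \ref{prop}, by translating ``a non-trivial reduced cycle in $\mathcal{A}$ labeled by an element of $\la x,y_j\ra$'' into ``$H$ meets a conjugate of $\la x,y_j\ra$ non-trivially''. Fix the index $j\in\{1,\dots,n\}$ furnished by Proposition \ref{prop}, so that $H\cap \la x,y_j\ra^{g}=\{1\}$ for every $g\in G_n$; since $g$ ranges over all of $G_n$, this says equivalently that $H$ meets no conjugate of $\la x,y_j\ra$ non-trivially (and this formulation is insensitive to the left/right convention for $(\,\cdot\,)^g$).

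First I would recall, from the description in Section \ref{automata}, that $\mathcal{A}$ is a subgraph of the Schreier graph of $H$, so every vertex $v$ of $\mathcal{A}$ is a right coset $v=Hg$ for some $g\in G_n$. Arguing by contradiction, suppose $\mathcal{A}$ contains a non-trivial reduced cycle based at a vertex $v=Hg$, whose label, read in reduced form, is a non-trivial word $w\in\la x,y_j\ra$. Because the cycle is reduced, $w$ is a reduced word of $F$ using only the letters $x^{\pm1},y_j^{\pm1}$, hence a non-trivial element of $\la x,y_j\ra$. Reading $w$ around the cycle from $Hg$ back to $Hg$ means, in the language of the action, that $Hgw=Hg$, i.e. $gwg^{-1}\in H$.

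Now $gwg^{-1}$ lies in the conjugate subgroup $g\la x,y_j\ra g^{-1}$, and since $w\neq 1$ conjugation keeps it non-trivial; thus $gwg^{-1}$ is a non-trivial element of $H$ contained in a conjugate of $\la x,y_j\ra$, contradicting the choice of $j$. This gives the corollary. The entire content of the argument is the dictionary of the second step: reading a reduced word around a reduced cycle based at the coset $Hg$ produces exactly the elements of $H$ that are conjugate (via $g$) into the two-generator subgroup. The only point requiring care—and the reason the full strength of Proposition \ref{prop} is needed—is that the cycle need not pass through the origin $o$: controlling cycles through $o$ alone would bound only $H\cap\la x,y_j\ra$, whereas arbitrary base vertices force the quantification over all $g\in G_n$ present in the Proposition. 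Once the coset bookkeeping and the conjugation convention are matched up, no further work is required.
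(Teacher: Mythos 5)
Your proof is correct and is essentially identical to the paper's: both fix the index $j$ from Proposition \ref{prop}, read the label $w$ of a hypothetical reduced non-trivial cycle based at a coset $Hg$, observe that $Hgw=Hg$ places a non-trivial element in $H$ intersected with a conjugate of $\la x,y_j\ra$, and derive a contradiction. No gaps.
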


\begin{proof}
Let $j\in\{1,\dots,n\}$ be an index for which the conclusion of Proposition \ref{prop} is satisfied. 
Assume by contradiction that $s$ is a reduced non trivial cycle in $\mathcal {A}$ labeled by a word in $\la x,y_j\ra$ and let $v$ be the initial (and final) vertex of $s$. There exists $g\in G$ such that $v$ represents the coset $Hg$. Thus, if $w$ is the label of $s$, $Hgw=Hg$ implies that $w\in H^g\cap \la x, y_j\ra$. Then $w^{g^{-1}}\in H\cap \la x,y_j\ra^{g^{-1}}$ is a non trivial element, a contradiction.
\end{proof}

\section{Construction of group actions with a given Tarski number}\label{construction}

In this section we prove Theorem \ref{thm:1}. Let $F=G_n=\la x,y_1,\dots,y_n,z\ra$ be an $n+2$ generated free group for $n\in \mathbb{N}$. We will construct a subgroup $H$ for which the action $F\acts F/H$ is faithful and has Tarski number $n+3$.  $H$ will be defined by means of its core.

Let $(p_{i,1},\dots,p_{i,n})_{ i\in\mathbb{N}}$ be an enumeration of all the $n$-tuples of elements of $G_n$. For each $i$ let $\mathcal A_i$ be the automaton representing the subgroup $K_i=\gamma_2\la p_{i,1},\dots, p_{i,n}\ra$ and let $o_i$ be its origin. By Lemma \ref{deg 4}, $o_i$ has at most $2n$ incoming edges. Thus, there exists a letter $c_i\notin\{z,z^{-1}\}$ different than the labels of all incoming edges of $o_i$.

We define the core $\mathcal C$ of $H$ in the following way (for an illustration, see Figure \ref{fig:graph}).
Let $o$ be the origin of $\mathcal C$ and $e_1,e_2,\dots$ an infinite sequence of edges, all labeled by $z$, such that ${e_1}_-=o$ and for all $i$ we have ${e_i}_+={e_{i+1}}_-$. Since the letters $c_i\notin\{z,z^{-1}\}$, for each $i$ it is possible to attach to ${e_i}_+$ an outgoing edge labeled by $c_i$. To its head vertex one can attach the automaton $\mathcal A_i$ by identifying $o_i$ with the vertex in question. Indeed, the choice of letters $c_i$ guarantees that no cancellation occurs in $\mathcal C$.

Clearly, if $H$ is the group represented by $\mathcal C$, then $H=\bigcup_{i\in\mathbb{N}}\gamma_2\la p_{i,1},\dots,p_{i,n}\ra^{(z^ic_i)^{-1}}$.
By construction, the origin $o$ has degree $1$ in $\mathcal C$. 
 In particular, there are vertices in $\mathcal{C}$ of degree smaller than $2(n+2)$ and $H$ does not contain any normal subgroup \cite{BOl}.

\begin{figure}[b]
\includegraphics[width=1\columnwidth]{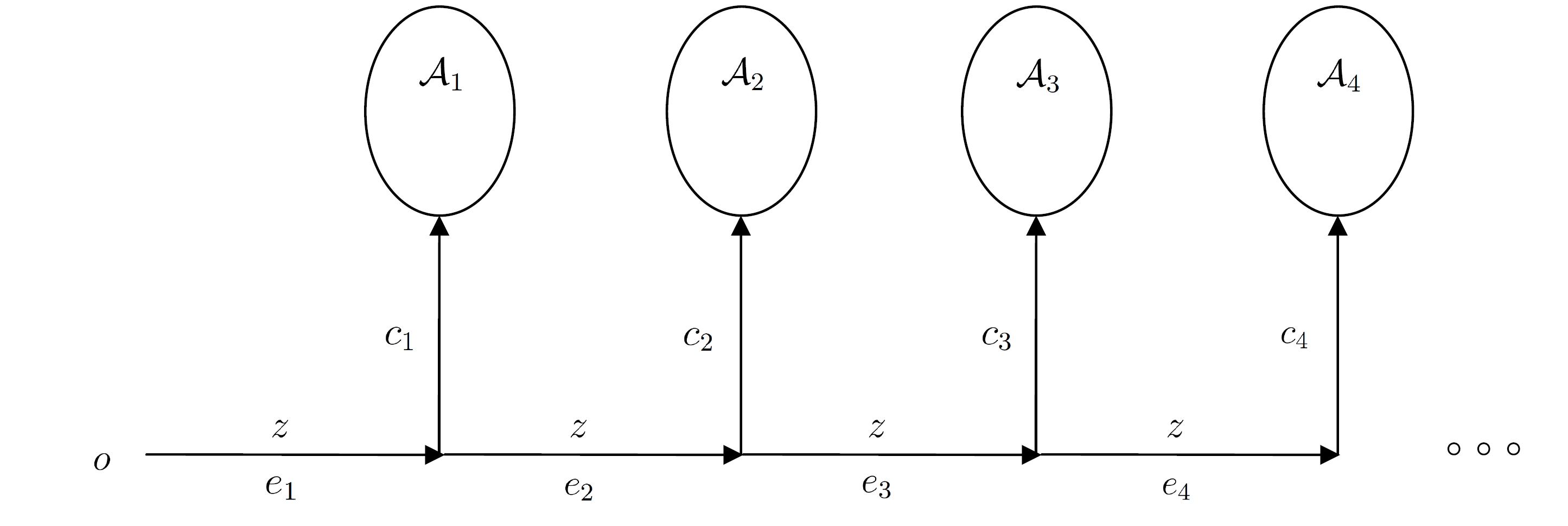}
\caption{The core of $H$}
    \label{fig:graph}
\end{figure}

Let $\mathcal{G}$ be the Schreier graph of the action $F\acts F/H$. The graph $\mathcal{G}$ can be obtained from $\mathcal{C}$ by attaching trees to every vertex of $\mathcal{C}$ of degree less than $2(n+2)$. The action $F\acts F/H$ can be described in terms of the action of $F$ on the graph $\mathcal{G}$.

\begin{Lemma}\label{u}
Let $v$ be a vertex in $\mathcal{G}$. 
There is at most one automaton $\mathcal A_m$ to which one can get from $v$ via a path whose label does not include the letter $z^{\pm 1}$. 
\end{Lemma}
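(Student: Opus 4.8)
The plan is to exploit the fact that in $\mathcal{G}$ the automata $\mathcal{A}_i$ are joined to the rest of the graph only along the $z$-spine, so that forbidding the letter $z^{\pm 1}$ severs every connection between distinct automata. Write $s_0=o$ and $s_i=(e_i)_+$ for the spine vertices, so that the $z$-labeled edges of $\mathcal{C}$ are the spine edges $s_{i-1}\to s_i$ together with whatever $z$-edges happen to occur inside the individual automata $\mathcal{A}_i$. Recall that $\mathcal{A}_i$ is attached to $s_i$ through the single edge labeled $c_i\notin\{z,z^{-1}\}$ joining $s_i$ to $o_i$, and that $\mathcal{G}$ is obtained from $\mathcal{C}$ by hanging trees on the vertices of $\mathcal{C}$ of deficient degree.

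First I would record the block structure of $\mathcal{G}$. Partition the vertex set of $\mathcal{G}$ into blocks $T_0,T_1,T_2,\dots$, where for $i\ge 1$ the block $T_i$ consists of $s_i$, of all vertices of $\mathcal{A}_i$, and of all vertices of trees attached to $s_i$ or to vertices of $\mathcal{A}_i$, while $T_0$ consists of $s_0$ together with its trees. Since the automata are pairwise disjoint in $\mathcal{C}$ (the choice of the letters $c_i$ guarantees that no folding identifies vertices of distinct pieces) and since each attached tree meets $\mathcal{C}$ in exactly one vertex, this is a genuine partition, and every vertex of $\mathcal{A}_m$ lies in $T_m$. The key claim is that the only edges of $\mathcal{G}$ joining two different blocks are the spine edges $s_i\to s_{i+1}$, all labeled $z$. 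Indeed, the edge $c_i$ stays inside $T_i$ (both endpoints $s_i,o_i$ lie in $T_i$), the internal edges of each $\mathcal{A}_i$ and all tree edges stay inside a single block, and the spine edges are the only remaining edges of $\mathcal{G}$.

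Granting this claim, the lemma is immediate: a path from $v$ avoiding $z^{\pm 1}$ uses no spine edge, hence remains inside the single block $T_i$ containing $v$, and $T_i$ meets at most one automaton, namely $\mathcal{A}_i$. Equivalently, a $z$-free path from a vertex of $\mathcal{A}_m$ to a vertex of $\mathcal{A}_{m'}$ with $m\ne m'$ would have to cross from $T_m$ to $T_{m'}$ and so would contain a spine edge, a contradiction.

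The only point requiring care is the block-edge claim, and inside it the assertion that every exit from the region $\mathcal{A}_i$ together with its trees back into the rest of $\mathcal{G}$ passes through the edge $c_i$ at $o_i$: a vertex $w\ne o_i$ of $\mathcal{A}_i$ has, in $\mathcal{G}$, only its $\mathcal{A}_i$-internal edges and tree edges (dead ends), so there is no route to another spine vertex without first returning to $s_i$ along $c_i$ and then traversing a spine $z$-edge. This rests entirely on the construction of $\mathcal{C}$---the disjointness of the pieces and the fact that $c_i$ is the unique link between $\mathcal{A}_i$ and the spine---so the argument is purely combinatorial and needs no input from Corollary \ref{cyc2}.
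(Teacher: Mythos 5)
Your proof is correct and follows essentially the same route as the paper: the paper asserts as ``clear'' that no path avoiding $z^{\pm 1}$ can join two distinct automata and then concludes by concatenating $t_1^{-1}t_2$, while you supply a careful justification of that asserted fact via the block decomposition of $\mathcal{G}$ along the $z$-spine. The extra detail (disjointness of the pieces after folding, trees attached at single vertices, the $c_i$-edge as the unique link) is exactly what the paper's ``clearly'' is standing in for, so nothing is missing and nothing is genuinely different.
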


\begin{proof}
Clearly, there is no path between two different automata $\mathcal A_l$ and $\mathcal A_r$ which does not cross an edge labeled by $z^{\pm 1}$. Assume that $t_1,t_2$ are paths from $v$ to two distinct automata $\mathcal A_{\alpha}$ and $\mathcal A_{\beta}$, such that both $t_1$ and $t_2$ do not cross any edge labeled by $z^{\pm 1}$. Then the path $t_1^{-1}t_2$ connects $\mathcal A_{\alpha}$ and $\mathcal A_{\beta}$ and does not contain the letter $z^{\pm 1}$.
\end{proof}

Let $\{X_j\}_{j=1}^n$ be a partition of the set of automata $\{\mathcal A_i\}_{i\in\mathbb{N}}$, where $\mathcal A_i\in X_j$ if and only if $j$ is the smallest index which satisfies the conclusion of Corollary \ref{cyc2} for the automaton $\mathcal A_i$.  
By Lemma \ref{u}, for each vertex $v$ of $\mathcal{G}$ there exists at most one automaton to which it is possible to get via a path not including the letter $z^{\pm 1}$. Thus, it is possible define a partition of the vertex set of $\mathcal{G}$ to $n$ sets $\{Y_j\}_{j=1}^{n}$ in the following way. For a vertex $v$, if $\mathcal A_m$ is an automaton reachable from $v$ via a path not containing the letter $z^{\pm 1}$ and  $\mathcal A_m$ belongs to $X_j$ for some $j$, then $v$ will belong to $Y_j$ for the same $j$. If no automaton is reachable from $v$ via such a path, $v$ will belong in $Y_{1}$. 
 Note, that each of the sets in the partition is closed under the action of $\la x,y_1,\dots,y_n\ra$.

\begin{Lemma}\label{cyc}
For $j=1,\dots,n$ the group $\la x,y_j\ra$ acts freely on $Y_j$.   
\end{Lemma}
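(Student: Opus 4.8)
The plan is to show that the action of $\la x,y_j\ra$ on $Y_j$ is free by proving that no nontrivial element of $\la x,y_j\ra$ fixes any vertex $v\in Y_j$. Since every orbit of the $F$-action is of the form $G/\Stab_F(v)$, a vertex $v$ is fixed by a word $w\in\la x,y_j\ra$ precisely when $w$ lies in the stabilizer of $v$, i.e. $w\in H^g$ where $g\in F$ is any element with $v=Hg$ (equivalently, $w$ can be read as a reduced cycle at $v$ in $\mathcal{G}$). So the goal is to establish that there is no nontrivial reduced cycle in $\mathcal{G}$ at a vertex of $Y_j$ whose label lies in $\la x,y_j\ra$.

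First I would reduce cycles in $\mathcal{G}$ to cycles in $\mathcal{C}$, and then to cycles within a single automaton $\mathcal A_m$. A reduced cycle labeled by a word $w\in\la x,y_j\ra$ never uses the letter $z^{\pm1}$, so it stays within the $z$-free part of $\mathcal{G}$. The key structural input is Lemma \ref{u}: from the basepoint $v$ of the cycle, any $z$-free path reaches at most one automaton $\mathcal A_m$. By the definition of the partition $\{Y_j\}$, since $v\in Y_j$, this automaton $\mathcal A_m$ (if it exists) belongs to $X_j$, meaning $j$ is the smallest index satisfying the conclusion of Corollary \ref{cyc2} for $\mathcal A_m$ — in particular $j$ does satisfy it, so $\mathcal A_m$ has no nontrivial reduced cycles labeled by elements of $\la x,y_j\ra$. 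If no automaton is reachable, the $z$-free component containing $v$ is a tree attached to the core, which has no nontrivial reduced cycles at all.

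The main point to nail down carefully is that a reduced nontrivial cycle at $v$ with label in $\la x,y_j\ra$ would force the existence of a reduced nontrivial cycle labeled by an element of $\la x,y_j\ra$ inside the automaton $\mathcal A_m$, contradicting the defining property of $X_j$. The idea is that the cycle lives inside the $z$-free connected piece of $\mathcal{G}$ containing $v$; all the reduced-cycle structure of this piece is concentrated in $\mathcal A_m$, since the trees attached at core vertices and the edges leading into $\mathcal A_m$ are ``tree-like'' and contribute no reduced cycles. Concretely, if $t$ is a $z$-free reduced path from $v$ into $\mathcal A_m$ reaching a vertex $u$, then conjugating the cycle $s$ at $v$ by $t$ produces a reduced cycle at $u$ inside $\mathcal A_m$ still labeled by a word in $\la x,y_j\ra$ (after reduction the label remains in $\la x,y_j\ra$, since $t$ itself need not be $z$-free in general but can be taken so, and only letters $x^{\pm1},y_j^{\pm1}$ appear in $s$).

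The hard part will be the bookkeeping to guarantee that the cycle genuinely transplants into $\mathcal A_m$ as a reduced, nontrivial cycle, rather than collapsing or escaping into an attached tree; this uses that $\mathcal A_m$ is exactly the minimal subgraph carrying the reduced cycles of its $z$-free component, together with the observation that the attaching edge labeled $c_m$ carries a letter distinct from the labels of all incoming edges of $o_m$, so no unexpected cancellation merges the tree structure with $\mathcal A_m$. Once this transplantation is justified, Corollary \ref{cyc2} applied to $\mathcal A_m\in X_j$ gives the contradiction, proving that $\Stab_F(v)\cap\la x,y_j\ra=\{1\}$ for every $v\in Y_j$, which is precisely freeness of the $\la x,y_j\ra$-action on $Y_j$.
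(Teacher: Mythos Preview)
Your overall strategy coincides with the paper's: assume a nontrivial reduced $w\in\la x,y_j\ra$ fixes $v\in Y_j$, observe that the resulting reduced cycle $s$ is $z$-free, use Lemma~\ref{u} and the definition of $Y_j$ to locate the unique automaton $\mathcal A_m\in X_j$, and then derive a contradiction from Corollary~\ref{cyc2}. This is exactly the paper's route.

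There is, however, a genuine wrinkle in your ``transplantation by conjugation'' step. You take an arbitrary $z$-free reduced path $t$ from $v$ to a vertex $u\in\mathcal A_m$ and assert that the reduced form of $t^{-1}st$ is labeled by an element of $\la x,y_j\ra$. That is not justified as stated: the label of $t$ can involve letters $y_k^{\pm1}$ with $k\neq j$ (for instance the attaching letter $c_m$), and then the reduced conjugate $t^{-1}st$ typically retains those letters, since the label of $s$ contains no $y_k^{\pm1}$ to cancel against them. Your parenthetical ``only letters $x^{\pm1},y_j^{\pm1}$ appear in $s$'' is precisely why such cancellation \emph{cannot} occur, not why it does.

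The paper sidesteps this by never conjugating. It observes directly that, since $\mathcal G\setminus\bigcup_i\mathcal A_i$ is a forest, the reduced nontrivial cycle $s$ must contain as a \emph{subpath} a reduced nontrivial cycle $s'$ lying entirely inside some $\mathcal A_m$; the label of $s'$, being a subword of the label of $s$, is then automatically in $\la x,y_j\ra$. Equivalently, in your framework: because the part of the $z$-free component outside $\mathcal A_m$ is a tree, the reduced cycle $s$ is forced to begin with the unique reduced tree path from $v$ to its attaching point in $\mathcal A_m$, so that path \emph{is} an initial segment of $s$ and its label lies in $\la x,y_j\ra$. Once you make this observation explicit, your argument goes through; as written, the justification is missing.
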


\begin{proof}
Let $j\in\{1,\dots,n\}$ and $v$ be a vertex of $Y_j$. Assume by contradiction that $w\in \la x,y_j\ra $ is a reduced non trivial word stabilizing $v$. Then $w$ labels a reduced non trivial cycle $s$ from $v$ to itself in $\mathcal{G}$. Since $s$ is non trivial, it must contain as a subpath a reduced non trivial cycle $s'$ through some automaton $\mathcal A_m$. Note that $\mathcal A_m$ is reachable from $v$ via a subpath of $s$, which by definition does not contain the letter $z^{\pm 1}$. 
 Therefore, $v\in Y_j$ implies that $\mathcal A_m$ contains no reduced non trivial cycle labeled by a word in $\la x,y_j\ra$, a contradiction.
\end{proof}

\begin{Lemma}\label{>=}
The Tarski number of the action of $F$ on $\mathcal{G}$  is at least $n+3$. 
\end{Lemma}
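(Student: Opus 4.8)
The plan is to argue by contradiction and to exploit the fact that, for \emph{any} small collection of translating elements, the construction has deliberately planted an amenable orbit inside $\mathcal G$ on which those elements cannot witness paradoxicality. Suppose $\calT(F\acts\mathcal G)\le n+2$. Then there is a paradoxical decomposition with translating sets $S_1,S_2$ where $|S_1|+|S_2|\le n+2$, and by Remark \ref{1} I may assume $1\in S_1\cap S_2$. Hence the set $T=(S_1\cup S_2)\setminus\{1\}$ of non-trivial translating elements satisfies $|T|\le n$. Listing the elements of $T$ and padding with copies of $1$, I obtain an $n$-tuple $(p_1,\dots,p_n)$ of elements of $F$ generating $H_0:=\la T\ra$; this tuple occurs in our enumeration, say as the $i$-th one. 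Thus the automaton $\mathcal A_i$ of $K_i=\gamma_2\la p_1,\dots,p_n\ra=\gamma_2H_0$ is attached inside $\mathcal G$ at the vertex $o_i=Hg_i$ with $g_i=z^ic_i$. The objective is to produce finite sets violating the expansion inequality of Theorem \ref{first}(2), which is a necessary condition for a paradoxical decomposition with translating sets $S_1,S_2$.

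The key step is to show that the orbit $O:=o_iH_0\subseteq\mathcal G$ is amenable. By construction $H$ contains the conjugate $g_iK_ig_i^{-1}$, so for every $w\in\gamma_2H_0=K_i$ we have $g_iwg_i^{-1}\in H$, that is $o_iw=o_i$; hence $\gamma_2H_0\subseteq\Stab_{H_0}(o_i)$. Since $\gamma_2H_0$ is normal in $H_0$, the orbit map $H_0\to O,\ w\mapsto o_iw$, factors through the abelianization $H_0/\gamma_2H_0\cong\Z^r$, where $r=\rk H_0\le n$. Consequently $O$, viewed as a graph with edge-generators $p_1,\dots,p_n$, is a quotient of the Cayley graph of $\Z^r$ and is therefore amenable; in particular it carries F\o lner sets for the finite set $T\subseteq\{p_1,\dots,p_n\}$. (If $O$ is finite the argument is even easier, as then $At=A$ for every $t\in T$.)

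Finally I would feed a F\o lner set into Theorem \ref{first}. Given $\varepsilon<1/n$, choose a finite $A\subseteq O$ with $|At^{-1}\triangle A|<\varepsilon|A|$ for all $t\in T$, and set $A_1=A_2=A$. Since $S_1\cup S_2=\{1\}\cup T$ and every relevant translate lands again in $O$, we obtain
\[
|A_1S_1^{-1}\cup A_2S_2^{-1}|=\Big|A\cup\bigcup_{t\in T}At^{-1}\Big|\le |A|+\sum_{t\in T}|At^{-1}\triangle A|<|A|(1+n\varepsilon)<2|A|=|A_1|+|A_2|,
\]
contradicting Theorem \ref{first}(2). Hence no paradoxical decomposition with at most $n+2$ pieces exists, and $\calT(F\acts\mathcal G)\ge n+3$.

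The main obstacle is the amenability step: everything hinges on the orbit of $o_i$ being a quotient of $\Z^r$ rather than a genuinely non-amenable fragment of a free group. This is precisely what the passage to the commutator subgroup $\gamma_2\la p_1,\dots,p_n\ra$ buys, and it explains why the enumeration must range over \emph{all} $n$-tuples: whatever $\le n$ translating elements one might propose, the corresponding abelian orbit has already been built into $\mathcal G$. (Note that Lemma \ref{cyc} and Corollary \ref{cyc2} are not needed here; they serve the complementary upper bound.) One should also verify the routine bookkeeping that the F\o lner translates $At^{-1}$ stay inside the single orbit $O$, so that the displayed count genuinely takes place within $\mathcal G$.
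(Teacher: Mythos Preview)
Your proof is correct and follows essentially the same line as the paper's: assume a decomposition with $\le n+2$ pieces, normalize so that $1\in S_1\cap S_2$, observe that the at most $n$ nontrivial translating elements form a tuple appearing in the enumeration, and use that $\gamma_2\la p_1,\dots,p_n\ra$ fixes the corresponding origin $o_i$ to reduce to an abelian (hence amenable) orbit. The only difference is cosmetic: where you exhibit an explicit F\o lner set violating Theorem~\ref{first}(2), the paper instead passes through Corollary~\ref{Cor}, Lemma~\ref{equi}(2) and Remark~\ref{rem} to conclude that the abelian group $K/\gamma_2 K$ would be paradoxical.
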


\begin{proof}
Assume by contradiction that the action has Tarski number at most $n+2$ and let $S_1, S_2$ be translating sets of a paradoxical decomposition with $|S_1|+|S_2|\le n+2$. By Remark \ref{1}, we can assume that $1\in S_1\cap S_2$. Then, $S=(S_1\cup S_2)\setminus\{1\}$ is a set of $n$ elements at most. Let $K$ be the subgroup it generates.   Then $K\acts \mathcal{G}$ has a paradoxical decomposition with translating sets $S_1,S_2$.  Let $p_1,\dots,p_n$ be the elements of $S$ (possibly with repetitions) and assume the $n$-tuple $(p_1,\dots,p_n)$ was enumerated as tuple number $m$. Let $o_m$ be the origin of the automaton $\mathcal A_m$. By Corollary \ref{Cor}, $K\acts K/\Stab_K(o_m)$ has a paradoxical decomposition with translating sets $S_1,S_2$. Since $\gamma_2(K)\sub \Stab_{K}(o_m)$, Lemma \ref{equi}(2) implies that the  same is true for the action $K\acts K/\gamma_2(K)$. In particular, this action is paradoxical. By Remark \ref{rem}, the group $K/\gamma_2(K)$ is paradoxical, in contradiction to it being abelian.
\end{proof}

\begin{Lemma}\label{<=}
Let $F'=\la x,y_1,\dots,y_n\ra$. Then $F' \acts \mathcal{G}$ has a paradoxical decomposition with translating sets $S_1=\{1,x\}$, $S_2=\{1,y_1,\dots,y_n\}$. In particular, $\mathcal T(F'\acts \mathcal{G})\le n+3$. 
\end{Lemma}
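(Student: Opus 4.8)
The plan is to build the desired paradoxical decomposition of $F'\acts\calG$ block by block over the partition $\{Y_j\}_{j=1}^{n}$ of the vertex set, exploiting the freeness already isolated in Lemma~\ref{cyc}. First I would fix $j\in\{1,\dots,n\}$ and recall that, by Lemma~\ref{cyc}, the free group $\la x,y_j\ra$ acts freely on $Y_j$. Corollary~\ref{x,y} then produces a paradoxical decomposition of $Y_j$ with translating sets $\{1,x\}$ and $\{1,y_j\}$.

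The key observation is that this block decomposition can be read as one with the single fixed pair $S_1=\{1,x\}$ and $S_2=\{1,y_1,\dots,y_n\}$: since $\{1,y_j\}\subseteq S_2$, one assigns the empty piece to every translating element $y_i$ with $i\neq j$. Padding with empty sets does not disturb disjointness of the pieces and leaves both covering equations $Y_j=Q_1^{j}\cup Q_2^{j}y_j$ and $Y_j=P_1^{j}\cup P_2^{j}x$ unchanged. Hence \emph{every} block $Y_j$ admits a paradoxical decomposition with the \emph{same} translating sets $(S_1,S_2)$, independent of $j$.

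Next I would invoke that, as noted just before the statement, each $Y_j$ is closed under the action of $F'=\la x,y_1,\dots,y_n\ra$, so $\{Y_j\}_{j=1}^{n}$ is a partition of the vertex set of $\calG$ into $F'$-invariant sets. Lemma~\ref{orbits}(1), applied to this partition with the common translating sets $S_1,S_2$, assembles the block decompositions into a single paradoxical decomposition of $F'\acts\calG$ with translating sets $S_1=\{1,x\}$ and $S_2=\{1,y_1,\dots,y_n\}$. Since $|S_1|+|S_2|=2+(n+1)=n+3$, this gives $\mathcal{T}(F'\acts\calG)\le n+3$.

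I do not expect a genuine obstacle: the substantive content has already been placed in Lemma~\ref{cyc} (freeness on each $Y_j$) and in Corollary~\ref{x,y} (the optimal decomposition for a two-generator free action). The only point deserving care is the passage from the varying small translating set $\{1,y_j\}$ to the fixed large set $S_2$, so that the hypotheses of Lemma~\ref{orbits}(1) are met with one common pair $(S_1,S_2)$ for all blocks simultaneously; once that normalization is made, the conclusion follows mechanically.
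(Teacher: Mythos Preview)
Your proposal is correct and follows essentially the same argument as the paper: freeness of $\la x,y_j\ra$ on $Y_j$ via Lemma~\ref{cyc}, Corollary~\ref{x,y} for the block decomposition, padding with empty sets to pass to the common pair $(S_1,S_2)$, and Lemma~\ref{orbits}(1) to assemble over the $F'$-invariant partition $\{Y_j\}$.
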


\begin{proof}
$\mathcal{G}$ is the disjoint union of the sets $Y_j$ for $j=1,\dots,n$ where each of the sets is closed under the action of $F'$. 
 By Lemma \ref{cyc}, for each $j$, the action of $\la x,y_{j}\ra $ on $Y_j$
is free. Thus by Corollary \ref{x,y}, $Y_j$ has a paradoxical decomposition with translating sets $\{1,x\}$ and $\{1,y_{j}\}$. By adding empty sets to the decomposition, we get that every $Y_j$ has a paradoxical decomposition with translating sets $S_1$ and $S_2$. Thus Lemma \ref{orbits}(1) yields the result.
\end{proof}

By Lemma \ref{equi}(1), $\mathcal T(F\acts \mathcal{G})\le \mathcal T(F'\acts \mathcal{G})\le n+3$. Thus, by Lemma \ref{>=}, $$\mathcal T(F\acts F/H)=\mathcal T(F\acts \mathcal{G})=n+3.$$ $\hfill\qed$

\begin{Remark}
For every $k,l\in\mathbb{N}$ such that $k+l=n+1$ it is possible to rename the first $n+1$ generators $x,y_1,\dots y_n$ of $F=G_n$ by $x_1,\dots,x_k$, $y_1,\dots,y_l$. Then, for the subgroup $H$ constructed above, $F\acts F/H$ has a paradoxical decomposition with translating sets $S_1=\{1,x_1,\dots,x_k\}$ and $S_2=\{1,y_1,\dots,y_l\}$. Indeed, the only necessary change is to Proposition \ref{prop}.
\end{Remark} 

\begin{Proposition}\label{prop2}
Let $k,l\in\mathbb{N}$ and $G_{k,l}=\la x_1,\dots,x_k,y_1,\dots,y_l,z\ra$ be a $k+l+1$ generated free group. Then for every $p_1,\dots,p_{k+l-1}\in G_{k,l}$, if $H=\gamma_2\la p_1,\dots,p_{k+l-1}\ra$ is the derived subgroup of the group they generate, there exist $i\in\{1,\dots,k\}$ and $j\in \{1,\dots,l\}$ such that for all $g\in G_{k,l}$  we have $H\cap \la x_i,y_j\ra ^g=\{1\}$. 
\end{Proposition}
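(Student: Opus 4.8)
The plan is to prove the proposition by induction on $l$ with $k$ held fixed, taking Proposition \ref{prop} as the base case and running an argument parallel to its proof in the inductive step. For brevity write $P(k,l)$ for the assertion of the proposition for the pair $(k,l)$; observe that the number of elements $p_i$ is $k+l-1$, which specializes to $k$ elements when $l=1$, matching the $n=k$ elements of Proposition \ref{prop}.

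First I would dispose of the base case $l=1$ by a symmetry observation, which is the one genuinely new ingredient. The relabelling $x_i\mapsto y_i$ ($1\le i\le k$), $y_1\mapsto x$, fixing $z$, is an isomorphism of $G_{k,1}$ onto $G_k=\la x,y_1,\dots,y_k,z\ra$ carrying each $\la x_i,y_1\ra$ to $\la x,y_i\ra$ and $H=\gamma_2\la p_1,\dots,p_k\ra$ to the corresponding derived subgroup. Under this isomorphism $P(k,1)$ is precisely Proposition \ref{prop} with $n=k$, so the base case needs no further work.

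For the inductive step I would assume $P(k,l-1)$, suppose $P(k,l)$ fails for some $p_1,\dots,p_{k+l-1}\in G_{k,l}$, and mimic the proof of Proposition \ref{prop} using the projection $\pi\colon G_{k,l}\to G_{k,l-1}$ that kills $y_l$ and fixes every other generator. Failure of $P(k,l)$ means that for every pair $(i,j)$ with $i\le k$, $j\le l$, some conjugate of $\la x_i,y_j\ra$ meets $H$ nontrivially. Applying this to the single pair $(1,l)$ yields $g$ and a nontrivial $u\in\gamma_2\la p_1,\dots,p_{k+l-1}\ra\cap\la x_1,y_l\ra^{g}$ arising from a nontrivial word in $\gamma_2$ of the free group of rank $k+l-1$. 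Exactly as in Proposition \ref{prop}, $\pi(u)$ lies in $\la x_1\ra^{\pi(g)}\cap\gamma_2 G_{k,l-1}$, which is trivial since a nontrivial power of a conjugate of $x_1$ has nonzero $x_1$-exponent sum; hence $\pi(u)=1$, the images $\pi(p_s)$ fail to freely generate, and the group $K$ they generate has rank at most $k+l-2$. I would then fix a $(k+l-2)$-element generating set $q_1,\dots,q_{k+l-2}$ of $K$ and prove the analogue of the Claim: for each surviving pair $(i,j)$ with $i\le k$, $j\le l-1$, failure of $P(k,l)$ gives $v=b^{-1}wb$ with $w\in\la x_i,y_j\ra\setminus\{1\}$, and since $\pi$ kills only $y_l$ and $j\le l-1$ the reduced form of $w$ survives, so $\pi(v)\ne 1$ lies in $\gamma_2\la q_1,\dots,q_{k+l-2}\ra\cap\la x_i,y_j\ra^{\pi(b)}$. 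This exhibits the failure of $P(k,l-1)$, contradicting the induction hypothesis.

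I expect the main obstacle to be organizational rather than conceptual: because there are now several $x$-generators, the single rank-reducing relation must be extracted from one fixed pair (I used $(1,l)$), whereas the concluding Claim must be checked across all $k(l-1)$ surviving pairs $(i,j)$ with $j\le l-1$. The two delicate points inherited from Proposition \ref{prop}—verifying $\la x_1\ra^{\pi(g)}\cap\gamma_2 G_{k,l-1}=\{1\}$ and confirming that $\pi$ leaves the reduced form of each $w\in\la x_i,y_j\ra$ with $j\le l-1$ intact—both become immediate once $\pi$ is chosen to kill exactly the last $y$-generator, so no essentially new difficulty arises beyond the argument already given for Proposition \ref{prop}.
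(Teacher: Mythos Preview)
Your proof is correct and is the mirror image of the paper's own argument: the paper inducts on $k$ (with base case $k=1$ being literally Proposition~\ref{prop}, no relabelling needed) and uses the projection $\pi\colon G_{k+1,l}\to G_{k,l}$ killing $x_{k+1}$, whereas you induct on $l$ and kill $y_l$. The only cost of your orientation is the extra symmetry step to identify $P(k,1)$ with Proposition~\ref{prop}; once past that, the two inductive steps are formally identical, and neither approach offers any real advantage over the other.
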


\begin{proof}
By induction on $k$. The case $k=1$ is Proposition \ref{prop}. Assume the proposition holds for $k$ (and every $l$) but not for $k+1$. Then there exists $l\in\mathbb{N}$ such that the proposition fails for $G_{k+1,l}$. The
reduction to the case $G_{k,l}$ follows the same argument as that in Proposition \ref{prop}. Here the homomorphism $\pi\colon G_{k+1,l}\to G_{k,l}$ maps $x_{k+1}$ to the identity and any other generator to its copy.
\end{proof}

\begin{Corollary}
Let $k\ge 4$. There exists a finitely generated free group $F$ and a faithful transitive group action $F\acts X$, such that $\mathcal T(F\acts X)=k$ and for all $m,n\ge 2$ such that $m+n=k$ the action $F\acts X$ has a paradoxical decomposition with translating sets $S_1,S_2$ such that $|S_1|=m$ and $|S_2|=n$.
\end{Corollary}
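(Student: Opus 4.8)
The plan is to assemble the final Corollary directly from the machinery already in place, treating the earlier construction of Section \ref{construction} as a black box and feeding it the strengthened Proposition \ref{prop2} in place of Proposition \ref{prop}. Fix $k\ge 4$ and write $k=m+n$ with $m,n\ge 2$; set $l=m-1\ge 1$ and run the construction with $G_{k,l}=\la x_1,\dots,x_{l},\dots\ra$ suitably indexed so that the roles of the two translating sets are $S_1$ of size $m$ and $S_2$ of size $n$. Concretely, I would take $F=G_{k-1,\dots}$ a free group on $k$ generators of the form $x_1,\dots,x_{?},y_1,\dots,y_{?},z$, build the core $\mathcal C$ exactly as before by stringing the automata $\mathcal A_i$ of the derived subgroups $\gamma_2\la p_{i,1},\dots,p_{i,k-2}\ra$ along a $z$-ray, and let $H$ be the subgroup it represents and $X=F/H$.

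The key steps, in order: first, observe that the whole argument of Section \ref{construction} goes through verbatim once Corollary \ref{cyc2} is replaced by its analogue built from Proposition \ref{prop2} — the only input used about the generators was the existence of a coordinate pair on which the derived subgroup meets no conjugate of the corresponding rank-two free subgroup trivially. Second, I would re-run the partition argument (Lemma \ref{u}, Lemma \ref{cyc}, Lemma \ref{<=}) but now partitioning according to the pair $(i,j)$ supplied by Proposition \ref{prop2} rather than the single index $j$; on each block the relevant rank-two subgroup $\la x_i,y_j\ra$ acts freely, so Corollary \ref{x,y} gives a paradoxical decomposition with translating sets $\{1,x_i\}$ and $\{1,y_j\}$, and padding with empty sets and applying Lemma \ref{orbits}(1) upgrades this to one with the global sets $S_1=\{1,x_1,\dots\}$ ($m$ elements) and $S_2=\{1,y_1,\dots\}$ ($n$ elements). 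Third, the lower bound $\mathcal T(F\acts X)\ge k$ is exactly Lemma \ref{>=}, whose proof uses only that $H$ restricted to any subgroup generated by $\le k-2$ of the translating elements contains the derived subgroup, and this continues to hold. Faithfulness and transitivity come for free: transitivity because $X=F/H$ is a single coset space, and faithfulness because $H$ contains no nontrivial normal subgroup of $F$, which follows as before from the existence of a vertex of degree $<2\,\mathrm{rk}(F)$ in $\mathcal C$.

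The main obstacle — and the place I would spend real care — is the bookkeeping that makes a \emph{single} group $F$ and a \emph{single} subgroup $H$ work simultaneously for \emph{all} admissible splittings $m+n=k$. The cleanest route is to enumerate $(k-2)$-tuples of elements of $F$ and, for each such tuple, invoke Proposition \ref{prop2} to extract a pair $(x_i,y_j)$ on which the derived subgroup is suitably disjoint from conjugates; one then builds one core $\mathcal C$ from all these automata, so the same $H$ serves every splitting. The subtlety is that Proposition \ref{prop2} as stated produces a pair from a \emph{fixed} partition of the generators into $x$'s and $y$'s, whereas different splittings $(m,n)$ correspond to different partitions of the $k-1$ non-$z$ generators; one must check that the disjointness conclusion needed for Lemma \ref{cyc} only depends on the pair $(x_i,y_j)$ being a rank-two free factor, not on the ambient partition, so that a core built once is reusable across all splittings. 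Once that independence is verified, the corollary follows by combining the upper bound from the refined Lemma \ref{<=} with the lower bound from Lemma \ref{>=}, giving $\mathcal T(F\acts X)=k$ together with the prescribed translating-set sizes, and the proof concludes.
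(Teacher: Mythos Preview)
Your strategy is exactly the paper's: the corollary is deduced from the Remark preceding Proposition~\ref{prop2} together with Proposition~\ref{prop2} itself, using the \emph{same} subgroup $H$ already built in Section~\ref{construction} and merely relabeling the non-$z$ generators for each splitting. However, your indexing is consistently off by one. The free group should have rank $k-1$ (that is, $F=G_n$ with $n=k-3$), the non-$z$ generators number $k-2$, and the enumerated tuples are $(k-3)$-tuples; with your choice of rank $k$ and $(k-2)$-tuples, Lemma~\ref{>=} would yield $\calT(F\acts X)\ge k+1$ and Lemma~\ref{<=} would yield $\le k+1$, giving the wrong Tarski number. Correspondingly, in the lower-bound step the set $S=(S_1\cup S_2)\setminus\{1\}$ has at most $k-3$ elements, not $k-2$.

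The ``main obstacle'' you flag is not one. The core $\calC$ is built once from the automata of the derived subgroups $\gamma_2\la p_{i,1},\dots,p_{i,k-3}\ra$, and this construction never mentions a partition of the generators into $x$'s and $y$'s. For a given splitting $m+n=k$ one simply renames $x,y_1,\dots,y_{k-3}$ as $x_1,\dots,x_{m-1},y_1,\dots,y_{n-1}$; Proposition~\ref{prop2} then supplies, for each automaton $\calA_i$, a pair $(x_r,y_s)$ such that $\la x_r,y_s\ra$ meets no conjugate of $K_i$ nontrivially, and from there the partition into sets $Y_{(r,s)}$ and the freeness argument of Lemma~\ref{cyc} go through verbatim. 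Nothing about $\calC$ or $H$ changes with the splitting --- only the bookkeeping in the analogue of Lemma~\ref{<=} does --- so there is no independence to verify.
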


Note that nothing similar is known for groups. Indeed, we don't have an example of a group with Tarski number $k$ which has two paradoxical decompositions, one with translating sets of size $m_1$ and $n_1$ and the other with translating sets of size $m_2$ and $n_2$, such that for $i=1,2$ we have $m_i+n_i=k$ and $\{m_1,n_1\}\neq \{m_2,n_2\}$.

\section{Unbounded Tarski numbers}\label{qi}

In what follows, $p$ will be a fixed prime number. Let $F$ be a finitely generated non abelian free group.
Let $\{\omega_n F\}_{n\in\dbN}$ be the {\it Zassenhaus $p$-filtration} of $F$
defined by $\omega_n F=\prod_{i\cdot p^j\geq n}(\gamma_i F)^{p^j}$.
It is easy to see that $\{\omega_n F\}$ is a descending chain of
normal subgroups of $p$-power index in $F$.
Moreover, $\{\omega_n F\}$ is a base for the pro-$p$ topology on $F$, so in particular, $F$ being residually-$p$ implies that $\cap \omega_n F=\{1\}$. 
 It follows that for any $n\in\mathbb{N}$ there exists $m(n)\in\mathbb{N}$ such that the reduced form of any element of $\omega_{m(n)}F$ is of length $\ge 12n$. Clearly, the index $[F:\omega_{m(n)}F]>n$.
 Thus, by the Schreier index formula, $\omega_{m(n)}F$ is free of rank $>n$. In particular, every $n$ elements $p_1,\dots,p_n\in\omega_{m(n)}F$ generate a subgroup of infinite index inside $\omega_{m(n)}F$ and thus inside $F$.

Theorem \ref{thm:3} is a straightforward corollary of the following theorem.

\begin{Theorem} \label{thm:32}
Let $F=\la x,y,z\ra$ and for each $n\in\mathbb{N}$ let $m(n)$ be as described above. There exists $H\le F$ with the following properties.
\begin{enumerate}
\item $H$ does not contain a non trivial normal subgroup of $F$.
\item For each $n\in \mathbb{N}$, $\mathcal T(\omega_{m(n)}F\acts F/H)\ge n+3$. 
\item $\mathcal T(F\acts F/H)=6$.
\end{enumerate}
\end{Theorem}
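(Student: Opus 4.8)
The plan is to construct $H$ as a refinement of the construction from Section \ref{construction}, but now using the subgroups $\omega_{m(n)}F$ to force the Tarski numbers of the restricted actions to grow. The construction in Section \ref{construction} built a core $\mathcal{C}$ by attaching, along an infinite $z$-labeled ray, the derived-subgroup automata $\mathcal{A}_i$ of $\gamma_2\la p_{i,1},\dots,p_{i,n}\ra$ for every $n$-tuple over a free group of rank $n+2$. Here the ambient group is fixed as $F=\la x,y,z\ra$ of rank $3$, and I expect the author to attach, for every $n$ and every $n$-tuple $(p_1,\dots,p_n)$ drawn from $\omega_{m(n)}F$, the automaton of $\gamma_2\la p_1,\dots,p_n\ra$. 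The point is that Proposition \ref{prop} (or its generalization, Proposition \ref{prop2}) is a statement about free groups of arbitrary rank, so the obstruction to cyclic behavior of such derived subgroups persists; but now the relevant "free basis" letters $x,y_j$ must be replaced by appropriate elements of the given $\omega_{m(n)}F$, which is itself free of rank $>n$.

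For part (1), the argument is essentially the same as in Section \ref{construction}: by construction the origin $o$ has finite degree (degree $1$ along the $z$-ray), so there are vertices of degree less than $2\cdot 3=6$, and by \cite{BOl} this forces $H$ to contain no nontrivial normal subgroup of $F$. For part (3), I would show the upper bound $\mathcal T(F\acts F/H)\le 6$ by applying Theorem \ref{second}: it suffices to exhibit a three-element set $S=\{a,b,c\}\subset F$ such that for every finite $A\subset F/H$ one has $|AS^{-1}\cup A|\ge 2|A|$. The natural candidate is $S=\{x,y,z\}$ (or a similar triple of free generators), and the inequality should follow from the fact that the Schreier graph $\mathcal{G}$ of $H$ looks tree-like away from the attached automata, together with Theorem \ref{first}. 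The lower bound $\mathcal T(F\acts F/H)\ge 6$ follows from Theorem \ref{Jonsson} combined with an argument, analogous to Lemma \ref{>=}, showing that no two free generators act with cyclic point stabilizers, ruling out Tarski number $4$, and a further argument ruling out $5$.

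For part (2), which I expect to be the main obstacle, the goal is to show $\mathcal T(\omega_{m(n)}F\acts F/H)\ge n+3$. The strategy mirrors Lemma \ref{>=}: suppose the restricted action had a paradoxical decomposition with $|S_1|+|S_2|\le n+2$; after normalizing $1\in S_1\cap S_2$ via Remark \ref{1}, the set $S=(S_1\cup S_2)\setminus\{1\}$ has at most $n$ elements, all lying in $\omega_{m(n)}F$. Let $K=\la S\ra$ and list the elements of $S$ as $p_1,\dots,p_n\in\omega_{m(n)}F$. By construction this $n$-tuple was enumerated and its automaton $\mathcal{A}_m$ for $\gamma_2\la p_1,\dots,p_n\ra$ appears in $\mathcal{C}$ at origin $o_m$. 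Since $\gamma_2(K)\subseteq\Stab_K(o_m)$, Corollary \ref{Cor} and Lemma \ref{equi}(2) reduce the existence of the decomposition to a paradoxical decomposition of $K\acts K/\gamma_2(K)$, hence by Remark \ref{rem} to a paradoxical decomposition of the abelian group $K/\gamma_2(K)$, which is impossible.

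The delicate points, and where I expect the real work to lie, are twofold. First, one must arrange the enumeration so that \emph{every} $n$-tuple from $\omega_{m(n)}F$ (for the appropriate $n$) is actually attached, while still keeping the core well-defined: the length bound guaranteeing $|w|\ge 12n$ for $w\in\omega_{m(n)}F$ is presumably what ensures that distinct attached automata do not interfere and that the attaching letters $c_i$ can be chosen consistently, so that no cancellation occurs and the reduced-cycle structure of each $\mathcal{A}_i$ is preserved in $\mathcal{G}$. Second, one must verify the compatibility of part (3) with part (2): attaching automata for \emph{all} $n$ simultaneously must not drive $\mathcal T(F\acts F/H)$ above $6$. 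This is exactly where the length bound $\ge 12n$ should be exploited—it guarantees the attached pieces are "sparse" and "deep" enough that the global three-element Følner-type inequality of Theorem \ref{second} survives, so that the full group $F$ still sees a Tarski number of $6$ even though larger and larger finite-index (or deep filtration) subgroups see unboundedly large Tarski numbers. Reconciling these two competing requirements is the heart of the theorem.
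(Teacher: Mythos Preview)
Your overall architecture is right, but you carry over one choice from Section~\ref{construction} that is exactly the wrong one here: you propose to attach the automata of the \emph{derived} subgroups $\gamma_2\la p_1,\dots,p_n\ra$, whereas the paper attaches the automata of $\la p_1,\dots,p_n\ra$ themselves. In Section~\ref{construction} the passage to $\gamma_2 K$ was needed to feed the abelianization trick of Lemma~\ref{>=}; here it is unnecessary, since once the full automaton of $K$ sits inside $\mathcal{C}$, its origin is literally a fixed point for $K$, so any $n$-generated $K\le\omega_{m(n)}F$ has a global fixed point in $\mathcal{G}$ and part~(2) follows immediately. More importantly, your choice breaks the upper bound $\le 6$. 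The paper's argument for $|AS^{-1}\cup A|\ge 2|A|$ (Lemma~\ref{<=6}) rests on Lemma~\ref{tree}: each attached automaton $\mathcal A_{(n,i)}$ has rank $\le n$, so a spanning tree is obtained by deleting at most $n$ edges; since every reduced cycle in $\mathcal A_{(n,i)}$ has length $\ge 12n$ (this is where the filtration depth $m(n)$ is used), the deleted edges can be chosen pairwise non-adjacent, so every vertex of $\mathcal{G}$ retains $\ge 5$ of its $6$ neighbors in the global spanning tree, whence the expansion. If you attach $\gamma_2 K$ instead, its rank is infinite whenever $\mathrm{rk}(K)\ge 2$, infinitely many edges must be removed, and the counting argument collapses. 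Your remark that the graph ``looks tree-like away from the attached automata'' does not rescue this: the obstruction is inside the automata.

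Your treatment of the lower bound $\ge 6$ is also a genuine gap. Theorem~\ref{Jonsson} only rules out $4$, and ``a further argument ruling out $5$'' is not an argument. The paper's Lemma~\ref{=6} does this directly via Theorem~\ref{first}: given any candidate translating sets $\{1,a\},\{1,b,c\}$, set $r=p^{m(3)}$ and $p_1=a^{r},\,p_2=(a^{b})^{r},\,p_3=(a^{c})^{r}\in\omega_{m(3)}F$; the automaton of $\la p_1,p_2,p_3\ra$ is one of the attached $\mathcal A_{(3,i)}$, and inside it one exhibits explicit finite sets $A_1,A_2$ violating the Hall-type inequality of Theorem~\ref{first}. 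This step again uses that the \emph{full} subgroup's automaton (not its derived subgroup's) is present in $\mathcal{C}$. Finally, a minor correction: the connecting ray in the paper is not $z$-labeled; the edge labels $l(e_k)$ are chosen inductively to avoid $\{l(e_{k-1})^{-1},c_{\alpha(k-1)},c_{\alpha(k)}^{-1}\}$, precisely so that each $\mathcal A_{\alpha(k)}$ can be attached without cancellation regardless of which letter $c_{\alpha(k)}$ happens to be.
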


\begin{proof}
For each $n\in\mathbb{N}$, let $(p_{i,1},\dots,p_{i,n})_{i\in\mathbb{N}}$ be an enumeration of the elements of $\omega_{m(n)}F$. 
For each $n,i\in\mathbb{N}$ let $\mathcal A_{(n,i)}$ be the automaton corresponding to the subgroup generated by the elements of the $n$-tuple  $(p_{i,1},\dots,p_{i,n})$. 
By Lemma \ref{deg 4}(3) there exists a vertex $o_{(n,i)}'$ in $\mathcal A_{(n,i)}$ with less than $6$ incoming edges. Let $c_{(n,i)}$ be a letter distinct from the labels of all the incoming edges of $o_{(n,i)}'$. 
Let $\alpha(k)$ for $k=1,2,\dots$ be an enumeration of all the pairs $(n,i)\in\mathbb{N}\times\mathbb{N}$.

The construction of the core $\mathcal{C}$ of $H$ will be similar to the construction used in section \ref{construction}. Let $o$ be the origin of $\mathcal C$ and $e_1,e_2,\dots$ be an infinite sequence of edges such that ${e_1}_-=o$ and for all $k$ we have ${e_k}_+={e_{k+1}}_-$. It is possible to label the edges $e_k$ inductively such that if $l(e_k)$ is the label of $e_k$,  then $l(e_1)\neq c_{\alpha(1)}^{-1}$ and for each $k>1$, the label $l(e_k)\notin \{l(e_{k-1})^{-1},c_{\alpha(k-1)},c_{\alpha(k)}^{-1}\}$. The choice of the labels of $e_k$ means that for all $k$, one can attach to ${e_k}_+$ an outgoing edge labeled by $c_{\alpha(k)}$. To its head vertex, it is possible to attach the automaton $\mathcal A_{\alpha(k)}$ by identifying $o'_{\alpha(k)}$ with the vertex in question. Indeed, the choice of letters $c_{\alpha(k)}$ guarantees than no cancellation occurs in $\mathcal C$. 
Denote by $\mathcal G$ the Schreier graph of the group $H$ represented by $\mathcal C$.

\begin{Lemma}
For each $n\in\mathbb{N}$ we have $\mathcal T(\omega_{m(n)}F\acts \mathcal G)\ge n+3$.
\end{Lemma}

\begin{proof}
Similar to the proof of Lemma \ref{>=}. If $K$ is an $n$-generated subgroup of $\omega_{m(n)}F$, it fixes a point of $\mathcal{G}$. In particular, the action $K\acts \mathcal G$ is not paradoxical.
\end{proof}

\begin{Lemma}\label{tree}
Let $n\in\mathbb{N}$. Let $p_1,\dots,p_n\in \omega_{m(n)}F$ and $\mathcal A$ be the automaton corresponding to the group $K$ they generate as a subgroup of $F$. 
\begin{enumerate}
\item There exists a spanning tree $T$ in $\mathcal A$ such that every vertex in $\mathcal A$ loses at most one of the edges adjacent to it in the transition from $\mathcal A$ to $T$.
\item $\mathcal A$ does not contain loops.
\end{enumerate}
\end{Lemma}

\begin{proof}
(1) As mentioned in the introduction, in order to construct a spanning tree of $\mathcal A$ we have to erase at most $n$ edges from $\mathcal A$.
Assume $i$ edges, $i \in\{0,\dots,n-1\}$, were already erased and no two of them are adjacent to the same vertex.
If the resulting graph is a tree, we are done. Otherwise, let $e$ be an edge whose removal would not affect the connectivity of the graph. Let $v$ be its initial vertex and $s$ a reduced cycle from $v$ to itself which starts with the edge $e$ and does not visit any vertex other than $v$ twice.
 Then, the removal of any edge of $s$ would not affect the connectivity of $\mathcal A$. If $v$ corresponds to the coset $Kg$ and $w$ is the label of the cycle $s$, then $w\in K^g\sub \omega_{m(n)}F$. As such, the length of $w$, and of the cycle $s$, is at least $12n$. Until now, at most $n-1$ edges have been erased. Each of them is adjacent to at most $2$ vertices. Each of the $2(n-1)$ vertices in question is adjacent to at most $6$ edges. Thus there are at most $12(n-1)$ edges adjacent to vertices which have already lost an edge. As such, at least one edge on the cycle $s$ is not one of these edges and one can erase it to complete the induction.

(2) As demonstrated in the proof of part $(1)$, all reduced non trivial cycles of $\mathcal {A}$ are of length $\ge 12n$.
\end{proof}

\begin{Lemma}\label{<=6}
Let $S=\{x,y,z\}$. Then for any finite set $A$ of vertices of $\mathcal {G}$, we have $|AS^{-1}\cup A|\ge 2|A|$. In particular, by Theorem \ref{second}, $\mathcal T(F\acts \mathcal {G})\le 6$.
\end{Lemma}

\begin{proof}
From each of the automata $\mathcal A_{(n,i)}$ attached during the construction of the core $\mathcal C$, it is possible to erase at most $n$ edges such that the resulting spanning tree of the automata satisfies the conclusion of Lemma \ref{tree}(1). Let $\mathcal T$ be the graph obtained in this way from the graph $\mathcal G$. Clearly, $\mathcal T$ is a tree.
Lemma \ref{tree}(2) implies that there are no loops in $\mathcal{G}$. Thus, every vertex in $\mathcal G$ is adjacent to $6$ distinct unoriented edges. The choice of the tree $\mathcal T$ implies that each vertex in $\mathcal T$ is adjacent to at least $5$ edges. Thus, considering orientation, every vertex of $\mathcal T$ has at least two incoming edges labeled by elements of $S$.

Let $A$ be a finite set of vertices of $\mathcal{G}$. Let $E$ be the set of all oriented edges  $e=(as^{-1},a)$ such that $a\in A$, $s\in S$ and the unoriented edge $\{as^{-1},a\}$ lies in $\mathcal T$. From the above, $E$ contains at least $2|A|$ edges and no pair of opposite ones. The endpoints of edges in $E$ lie in the set $A \cup AS^{-1}$.
Let $\Lambda$ be the unoriented graph with vertex set $A \cup AS^{-1}$
and edge set $E$ (with forgotten orientation). Then $\Lambda$ is a subgraph of $\mathcal T$;
in particular $\Lambda$ is a (finite) forest. Hence, if $V(\Lambda)$ and $E(\Lambda)$ denote the sets of vertices and edges of $\Lambda$, respectively, then
$$|A \cup AS^{-1}|=|V(\Lambda)|>|E(\Lambda)|=|E|\geq 2|A|,$$
 as desired.
\end{proof}

\begin{Lemma}\label{=6}
$\mathcal T(F\acts \mathcal{G})= 6$.
\end{Lemma}

\begin{proof}
By contradiction, let $S_1=\{1,a\},S_2=\{1,b,c\}$ (possibly with $b$=$c$) be translating sets of a paradoxical decomposition of $F\acts \mathcal G$.
For $r=p^{m(3)}$, let $p_1=a^{r},p_2=(a^{b})^{r}$ and $p_3=(a^{c})^{r}$. Then $p_1,p_2,p_3\in\omega_{m(3)}F$. 
Let $\mathcal A$ be the automaton corresponding to the group $K$ generated by $p_1,p_2,p_3$ and $o_{\mathcal{A}}$ its origin. $\mathcal A$ is attached to the core of $H$ by some vertex of $\mathcal A$.
Let $A_1,A_2$ be finite sets of vertices of $\mathcal {G}$ defined as follows. $A_1=o_{\mathcal{A}}\cdot\{a^j,b^{-1}a^j,c^{-1}a^j:0\le j\le r-1\}$ and $A_2=\{o_{\mathcal{A}}\}$.
A simple calculation shows that $A_1S_1^{-1}=A_1\{1,a^{-1}\}=A_1$ (for a visual illustration, see Figure \ref{fig:2}). Clearly, $A_2S_2^{-1}\sub A_1$. Thus,
 $$|A_1S_1^{-1}\cup A_2S_2^{-1}|=|A_1|<|A_1|+|A_2|,$$ 
by contradiction to the implication $(1)\Rightarrow (2)$ of Theorem \ref{first}.

\begin{figure}[h]
\includegraphics[width=0.75\columnwidth]{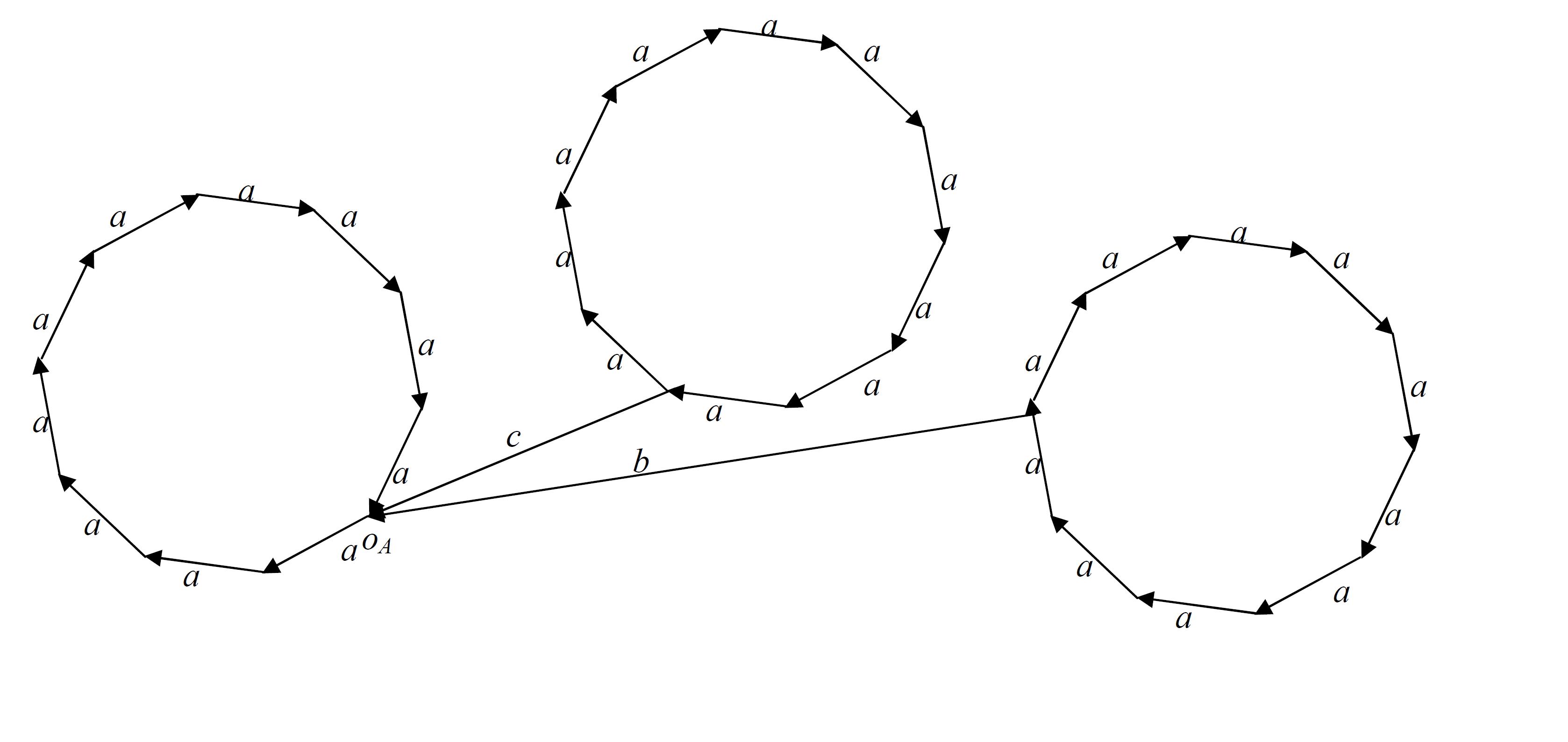}
\caption{The automaton $\mathcal A$. The set $A_1$ is the set of all the vertices in the figure.}
    \label{fig:2}
\end{figure}

\end{proof}
\end{proof}

\end{document}